\documentclass[11pt]{amsart}
\usepackage{amsfonts,latexsym, color}
\usepackage{amsmath}
\usepackage{amssymb}
 \font\tenmsb=msbm10 at 12pt \font\sevenmsb=msbm7 at 8pt \font\fivemsb=msbm5 at
6pt
\newfam\msbfam
\textfont\msbfam=\tenmsb \scriptfont\msbfam=\sevenmsb \scriptscriptfont\msbfam=\fivemsb

\def\R{{\mathbb R}}
\def\C{{\mathcal C}}
\def\B{{\mathbb B}}
\def\N{{\mathbb N}}

\def\Z{{\mathbb Z}}
\def\Ec{{\mathcal E}}
\begin{document}
\newcommand{\reset}{\setcounter{equation}{0}}

\newcommand{\beq}{\begin{equation}}
\newcommand{\noi}{\noindent}
\newcommand{\eeq}{\end{equation}}
\newcommand{\dis}{\displaystyle}
\newcommand{\mint}{-\!\!\!\!\!\!\int}

\def \theequation{\arabic{section}.\arabic{equation}}

\newtheorem{thm}{Theorem}[section]
\newtheorem{lem}[thm]{Lemma}
\newtheorem{cor}[thm]{Corollary}
\newtheorem{prop}[thm]{Proposition}
\theoremstyle{definition}
\newtheorem{defn}[thm]{Definition}
\newtheorem{rem}[thm]{Remark}
\newcommand{\Td}{T^d}
\newcommand{\dd}{\,\mathrm d}
\newcommand{\eps}{\varepsilon}
\newcommand{\Ent}{\operatorname{Ent}}
\newcommand{\divergence}{\operatorname{div}}

\def \bx{\hspace{2.5mm}\rule{2.5mm}{2.5mm}} \def \vs{\vspace*{0.2cm}} \def
\hs{\hspace*{0.6cm}}
\def \ds{\displaystyle}
\def \p{\partial}
\def \O{\Omega}
\def \H{{\mathbb H}}
\def \b{\beta}
\def \m{\mu}
\def \T{{\mathbb T}}
\def \ou{{\overline u}}
\def \ov{{\overline v}}
\def \D{\Delta}
\def \l{\lambda}
\def \s{\sigma}
\def \e{\varepsilon}
\def \a{\alpha}
\def \o{\omega}
\def \b{\beta}
\def \wv{{\widetilde v}}
\def \hw{{\widehat w}}
\def \HH{{\mathcal H}}
\def \E{{\mathbb E}}

\def\cqfd{%
\mbox{ }%
\nolinebreak%
\hfill%
\rule{2mm} {2mm}%
\medbreak%
\par%
}
\def \pr {\noindent {\bf Proof.} }
\def \rmk {\noindent {\it Remark} }
\def \esp {\hspace{4mm}}
\def \dsp {\hspace{2mm}}
\def \ssp {\hspace{1mm}}

\title{Asymptotically sharp Hardy-Rellich inequalities on lattices}

\author{Xia Huang}
\address{School of Mathematical Sciences, East China Normal
University and Shanghai Key Laboratory of PMMPSchool of Mathematical Sciences, Key Laboratory of MEA(Ministry of Education) and Shanghai Key Laboratory of PMMP, East China Normal University, Shanghai, 200241, China}
\email{xhuang@cpde.ecnu.edu.cn}
\author{Dong Ye}
\address{School of Mathematical Sciences, Key Laboratory of MEA(Ministry of Education) and Shanghai Key Laboratory of PMMP, East China Normal
University, Shanghai, 200241, China}
\email{dye@math.ecnu.edu.cn}

\date{}
\begin{abstract}
We determine the sharp asymptotic behavior of the optimal constants in the discrete Hardy-Rellich inequalities on the lattice $\mathbb{Z}^d$. For every fixed integer $m\ge 1$, let ${\mathcal C}_{m,d}$ be the best constant in the $m$-th order Hardy-Rellich inequality. We prove that
$$\lim_{d\to\infty}\frac{\mathcal C_{m,d}}{d^m}=2^m.$$
Our approach combines a Fourier reduction to weighted inequalities with the flat torus and general weighted Hardy-Rellich identities of first and second order. A key novelty is the use of probabilistic concentration estimates, specifically Hoeffding's inequality and entropy methods, to handle estimates involving the anisotropic weight $\omega^\gamma$ $(\gamma\geq 1)$ where
$$\omega(x)=\sum_{j=1}^{d} \Big(\sin\frac{x_j}{2}\Big)^2$$ in a dimension-uniform manner. These tools yield asymptotically sharp weighted estimates on the torus, from which the lattice inequalities follow by iteration.
\end{abstract}
\maketitle

\medskip

\noindent {\rm  Mathematics Subject Classification (2020):} 39A12; 35A23; 26D10; 60E15

\medskip

\noindent{\rm  Keywords:} Sharp Asymptotic Behavior; Hardy-Rellich Identities; Probabilistic Concentration Estimates ; Fourier Reduction;

\section{Introduction}
In 1921, in a letter to Hardy, Landau \cite{L}  gave a proof of the following inequality with optimal constants: for any $p>1$ and any non-negative real sequence ${\{a_n}\}_{n\geq 1}$, we have
\begin{align}\label{L}
\left(\frac{p-1}{p}\right)^p\sum_{n=1}^\infty\left(\frac{a_1+a_2+\cdot\cdot\cdot+a_n}{n}\right)^p\leq \sum_{n=1}^\infty a^p_n.
\end{align}

Equivalently, let $C_c(\N)$ be the space of finitely supported real sequences and $\Ec(\N)$ the subspace of $u \in C_c(\N)$ with $u_0=0$. If we define $\nabla u_n:=u_n-u_{n-1}$ for $n\geq 1$, then
\begin{align*}
\sum_{n=1}^\infty |\nabla u_n|^p \geq \left(\frac{p-1}{p}\right)^p \sum_{n=1}^\infty \frac{|u_n|^p}{n^p},\quad \forall~u\in \Ec(\N).
\end{align*}
The most well-known case $p=2$ was established by Hardy \cite{H} when he proposed a simple proof of Hilbert's inequality for double sequences. Over the past century, Hardy type inequalities have since been extensively developed and played important roles in many branches of analysis and geometry. A vast literature exists, especially in the continuous setting.

\medskip
Although Hardy's inequalities have discrete origins, sharp discrete Hardy-Rellich inequalities remain much less understood than their continuous counterparts. For example, on the half-line $\N$, sharp discrete Hardy inequalities have been understood only recently. Keller-Pinchover-Pogorzelski \cite{KPP0, KPP} established the following optimal first order discrete Hardy inequality:
\begin{align}
\sum_{n=1}^\infty |\nabla u_n|^2 \geq \sum_{n=1}^\infty \omega_n u^2_n,\quad \forall~u\in \Ec(\N)
\end{align}
with critical weights $\omega_n=\frac{\Delta \sqrt n}{\sqrt n} > \frac{1}{4n^2}$ for all $n \ge 1$. Here $\Delta v_n=2v_n-v_{n+1}-v_{n-1}$ denotes the discrete Laplacian on $\N$. 

\smallskip
Higher-order analogues on $\N$ have also been studied recently. In particular, the authors \cite{HY1} determined the sharp constants for Hardy-Rellich inequalities of arbitrary order $\ell\geq 2$ on $\N$ as follows. For any $\ell \ge 1$, let ${\mathcal D}^\ell = \Delta^{\ell/2}$ for even $\ell$ and ${\mathcal D}^\ell = \nabla\Delta^{(\ell-1)/2}$ for odd $\ell$. The following inequality holds
\begin{align}\label{N}
\sum_{2n\geq \ell}|{\mathcal D}^\ell u_n|^2 \geq \left[\frac{(2\ell)!}{4^\ell \ell!}\right]^2 \sum_{n=\ell}^\infty \frac{u^2_n}{n^{2\ell}},\quad \forall\; u\in C_c(\N),~u_k=0,~0\leq k\leq \ell-1.
\end{align}
It is worth mentioning that the above constraint $u_k=0$ for $0\leq k\leq \ell-1$ is necessary for the $\ell$-th order Rellich inequality to hold on $\N$, and the constant in \eqref{N} is sharp and coincides with the sharp constant for the Hardy-Rellich inequality in the continuum, see for instance \cite{O}:
\begin{align*}
\int_0^\infty |\varphi^{(\ell)}(x)|^2 dx \geq \left[\frac{(2\ell)!}{4^\ell \ell!}\right]^2 \int_0^\infty \frac{|\varphi(x)|^2}{x^{2\ell}} dx, \quad \forall \;\varphi \in C^{\ell}_c(0,\infty).
\end{align*}
%Here $D^\ell = \Delta^{\ell/2}$ for even $\ell$ and $D^\ell = \nabla\Delta^{(\ell-1)/2}$ for odd $\ell$.

Very recently, Stampach-Waclawek \cite{SW} improved the inequalities \eqref{N} by establishing optimal weights for general $\ell \ge 1$.
\begin{align*}
\sum_{2n\geq \ell}|{\mathcal D}^\ell u_n|^2 \geq \sum_{n=\ell}^\infty \rho_n |u_n|^2,\quad u\in C_c(\N),~u_k=0,~0\leq k\leq \ell-1
\end{align*}
where the weights $\rho_n:=\frac{\Delta^\ell g_{\ell, n}}{g_{\ell, n}}$ with $g_{\ell, n}=\sqrt n \prod_{1\le j \le \ell-1}(n-j)$ are sharp in the sense of \cite{KPP}.

\medskip
Naturally, we turn to the higher-dimensional lattices $\Z^d$ with $d \ge 2$. However, the situation changes drastically, the question of the best constants of Hardy-Rellich inequalities is wide open for $\mathbb{Z}^d$ with $d > 2m$. More precisely, let $\Ec(\Z^d) = \{u \in C_c(\Z^d), u(0) = 0\}$ be the space of compactly supported real functions on $\Z^d$ ($d \in \N$) vanishing at the origin. We are interested in the following optimal Hardy-Rellich inequalities on the lattice $\Z^d$: 
\begin{align*}
\sum_{n\in\mathbb{Z}^d} |\nabla u(n)|^2 \geq \C_H(d)\sum_{n\in\mathbb{Z}^d\setminus{\{0}\}}\frac{|u(n)|^2}{|n|^2}, \quad \forall\; u \in \Ec(\Z^d)
\end{align*}
and
\begin{align*}
\sum_{n\in\mathbb{Z}^d} |\Delta u(n)|^2 \geq \C_R(d)\sum_{n\in\mathbb{Z}^d\setminus{\{0}\}}\frac{|u(n)|^2}{|n|^4}, \quad \forall\; u \in \Ec(\Z^d).
\end{align*}
Here $\nabla u$ and $\Delta u$ stand for the discrete gradient and the discrete Laplacian on $\Z^d$ respectively, that is,  
\begin{align*}
\nabla u(n) := \big(u(n)-u(n-e_j)\big), \quad \Delta u(n):=\sum_{j=1}^d \big[2 u(n)-u(n-e_j)-u(n+e_j)\big]
\end{align*}
where $\{e_j\}$ is the canonical basis of $\R^d$. More generally, for $m \ge 1$, let $\C_{m, d}$ be the best constant in the following inequality
\begin{align}\label{d}
\sum_{n\in \mathbb{Z}^d} |\mathcal{D}^m u(n)|^2\geq \C_{m, d} \sum_{n\in\mathbb{Z}^d\setminus{\{0}\}} \frac{|u(n)|^2}{|n|^{2m}}, \quad \forall\; u \in \Ec(\Z^d).
\end{align}
where
\begin{align*}
\mathcal{D}^m u=\begin{cases}
\Delta^{m/2}u, & \mbox{if $m$ is even},\\
\nabla\Delta^{(m-1)/2}u,& \mbox{if $m$ is odd}.
\end{cases}
\end{align*}
Obviously, $\C_H(d) = \C_{1, d}$ and $\C_R(d) = \C_{2, d}$.

\medskip
Actually, Determining the optimal constants in \eqref{d} explicitly appears to be difficult, even for $\C_H(d)$, $d \ge 3$. One major difference between the lattice and Euclidean settings is that many crucial tools in the continuum, such as the scaling argument or integration by parts, are no longer available. In \cite{G}, Gupta used Fourier series methods which transform the discrete situation to weighted Hardy-Rellich inequalities on the torus $\T^d$, see Lemma \ref{g} below; the same approach can also be found in \cite{AB,KL}. He showed a very striking phenomenon in \cite{G}: For each fixed order $m \ge 1$, the sharp constants $\mathcal{C}_{m,d}$ in \eqref{d} satisfy
\begin{align*}
0 < \liminf_{d\to \infty}\frac{\mathcal{C}_{m,d}}{d^m} \leq \limsup_{d\to \infty}\frac{\mathcal{C}_{m,d}}{d^m} < \infty.
\end{align*}
In other words, the optimal constants $\mathcal{C}_{m,d}$ is of order $d^m$ as the dimension $d$ goes to infinity, which is in stark contrast with the continuous situation, since the corresponding sharp constants in $\R^d$ grow like $d^{2m}$, see \cite{TZ, B, GM, HY2}. For comparison, with $m = 2\ell$, we have the following optimal Rellich estimates in $\R^d$
\begin{align*}
\int_{\R^d} |\Delta^\ell u|^2 dx \geq \prod_{k=0}^{\ell-1} \frac{(d+4k)^2(d-4-4k)^2}{16}\int_{\R^d} \frac{u^2}{|x|^{4\ell}}dx,\quad \forall~u\in C_c^{2m}(\R^d).
\end{align*} 

\medskip
Gupta's result shows that the sharp constants $\mathcal{C}_{m,d}$ for $\Z^d$ in \eqref{d} are very different from those in $\R^d$ when $d$ becomes large. 
A natural and important question is then to identify the exact asymptotic behavior of $\mathcal{C}_{m,d}$ as $d\to\infty$. The main result of this paper gives a complete answer and resolves a question left open in \cite{G}.
\begin{thm}
\label{main}
Let $m\geq 1$ be an integer and $\C_{m, d}$ be the sharp constant in the Hardy-Rellich inequality \eqref{d}, then
$$\lim_{d\to\infty}\frac{\C_{m, d}}{d^m} =2^m.$$  
\end{thm}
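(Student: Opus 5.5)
We argue on the Fourier side and prove matching upper and lower bounds separately. Write $\hat u(x)=\sum_n u(n)e^{in\cdot x}$ on $\T^d=[-\pi,\pi]^d$. By Parseval, $\sum_n|\mathcal D^m u(n)|^2=(2\pi)^{-d}\int_{\T^d}(4\omega)^m|\hat u|^2\,dx$. The right-hand side $\sum_{n\neq 0}|u(n)|^2|n|^{-2m}$ becomes the $L^2$ norm of an $m$-fold antiderivative: $u(n)/|n|^{m}$ relates to $\hat u$ through $\partial_{x_j}$, since multiplication by $n_j$ corresponds to $-i\partial_j$. This is Gupta's reduction, Lemma \ref{g}. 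In the form we use, it says that $\C_{m,d}$ equals the best constant in a weighted inequality of the type
$$\int_{\T^d}\omega^{m}|\nabla^m f|^2\ \ge\ c\int_{\T^d}|f|^2$$
for suitable $f$ with zero mean (the condition $u(0)=0$), after the iterative change of variables $f\leftrightarrow\hat u$. The target is therefore $c\sim (d/2)^m$ up to the factor $4^m$.

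\textbf{Heuristic for the constant.} Under the uniform measure on $\T^d$, the coordinates $\sin^2(x_j/2)$ are i.i.d. with mean $1/2$. Hence $\omega=d/2+O(\sqrt d)$ with very high probability, so $\omega$ is essentially the constant $d/2$. The problem then reduces to the unweighted Poincaré-type inequality $\int|\nabla^m f|^2\ge\int|f|^2$ on zero-mean functions on the torus, whose sharp constant is $1$, attained by $e^{ix_j}$ modes. This gives the prediction $4^m(d/2)^m/\ldots$ and, after normalization, $2^m d^m$.

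\textbf{Upper bound.} We take test functions $u$ whose Fourier transform is a low-frequency trigonometric polynomial, for example $u=\delta_{e_1}-\delta_{0}$-type combinations, or more precisely $u(n)$ supported on the $2d$ nearest neighbours with $\hat u=\sum_j\sin x_j$-like profiles. We then evaluate both sides exactly. The left side is $\int (4\omega)^m|\hat u|^2$; expanding $\omega^m$ and using the independence of the coordinates gives $(2d)^m(1+O(1/d))\|\hat u\|^2$. The right side is of size $\|u\|^2$ because $|n|=1$ on the support. To get the sharp ratio we choose the test function to be symmetric across coordinates, so that the cross terms are controlled. This yields $\limsup \C_{m,d}/d^m\le 2^m$.

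\textbf{Lower bound.} This is the main step. We proceed by iteration on $m$, using the first- and second-order weighted Hardy–Rellich identities with weight $\omega^\gamma$, $\gamma\ge1$. For the identities, we take $V=\omega^{\gamma}$ and integrate by parts on the torus:
$$\int\omega^\gamma|\nabla g|^2=\int\omega^\gamma|\nabla(g\phi^{-1})|^2\phi^2+\int g^2\,\frac{-\mathrm{div}(\omega^\gamma\nabla\phi)}{\phi},$$
with $\phi$ chosen so that the potential term is about $(d/2)^{\gamma-1}\cdot\frac d2\cdot$const. Each step should lower the power of $\omega$ by one and gain a factor asymptotic to $2d$ in the normalized constants. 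Composing $m$ such steps gives $\liminf\ge 2^m$.

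\textbf{Main obstacle.} The difficulty is controlling the error terms, which involve $\nabla\omega=\tfrac12(\sin x_j)_j$ and $\Delta\omega$, relative to $\omega$, uniformly in $d$. Quantities like $|\nabla\omega|^2/\omega\approx\frac{\sum\sin^2x_j}{4\omega}$ are $O(1)$ on average but can be large near the origin, where $\omega$ is small. We would split $\T^d$ into the bulk $\{|\omega-d/2|\le d^{3/4}\}$ and its complement. Hoeffding's inequality shows that the complement has measure $e^{-c\sqrt d}$. To prevent the mass of $f$ from concentrating on this small set, we would use an entropy (Gibbs/Donsker–Varadhan) inequality, $\int g^2 1_A\le \frac{\Ent(g^2)+\log 2}{\log(1/|A|)}$-type, combined with a log-Sobolev bound for $\Ent(g^2)$ by $\int|\nabla g|^2$, which is dimension-free on the torus. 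In this way the bad region contributes $o(d^{m})$. This uniform-in-$d$ control of the anisotropic weight near its degenerate point is where we expect the real work to lie.
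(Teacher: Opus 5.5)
Your upper bound is correct and uses the same test function as the paper, $u\propto\delta_{e_1}-\delta_{-e_1}$. Note that $\delta_{e_1}-\delta_0\notin\Ec(\Z^d)$, so that first suggestion must be dropped. Working on the lattice side is clean: the right side of \eqref{d} is exactly $\sum|u|^2$, and the ratio is $\int(4\omega)^m\sin^2x_1/\int\sin^2x_1=(2d)^m(1+O(d^{-1}))$. No symmetrization is needed. The lower bound, however, has a genuine gap.

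First, the Hardy identity as you state it cannot produce a potential of size $\frac d2\,\omega^{\gamma-1}$ for any positive $\phi$. On the compact torus, absent flux at the origin, $\int_{\T^d}P\phi\,dx=-\int_{\T^d}\mathrm{div}(\omega^\gamma\nabla\phi)\,dx=0$, so $P$ must change sign. Concretely, $\phi=\omega^\alpha$ gives $P=-\alpha\,\omega^{\gamma-1}(\frac d2-\omega)-\alpha(\alpha+\gamma-1)\omega^{\gamma-2}|\nabla\omega|^2$. Since $\frac d2-\omega=O(\sqrt d)$ in the bulk, the main term $-\alpha\frac d2\omega^{\gamma-1}$ is cancelled there by $\alpha\omega^\gamma$. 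The factor $d/2$ is a spectral-gap effect and must use the zero mean. The paper first proves the weighted Poincar\'e inequalities \eqref{gap} and \eqref{gap1}, e.g.\ $\int\omega^\gamma|\nabla\varphi|^2\ge(1-o_d(1))\int\omega^\gamma\varphi^2$. It then uses them, with $\alpha=-\sqrt d$, to absorb the bad term $\alpha\int\omega^\gamma\varphi^2$ (resp.\ $(\alpha+\gamma)\int\omega^\gamma|\nabla\varphi|^2$). Your iteration also treats the second-order step as the first-order one applied to $\nabla\varphi$. That fails because $\int\omega^\gamma|\Delta\varphi|^2\neq\int\omega^\gamma|\nabla^2\varphi|^2$; this is why the identity \eqref{hardy2F}, with its Hessian terms $V_{x_ix_j}$, is needed.

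Second, your bulk/complement scheme (replace $\omega$ by $\frac d2$ on the bulk, make the bad set negligible via entropy) does not close. The log-Sobolev inequality \eqref{lemEnt} bounds $\Ent(g^2)$ by the unweighted $\int|\nabla g|^2$, and neither choice of $g$ works:
\begin{itemize}
\item For $g=f$, the weighted energy $\int\omega^\gamma|\nabla f|^2$ does not control $\int|\nabla f|^2$ near the origin, where $\omega$ vanishes and which lies inside your bad set.
\item For $g=\nabla f$, whose concentration on the bad set is what spoils the bulk comparison, you would need uncontrolled second derivatives.
\end{itemize}
Also, $|\nabla\omega|^2\le\omega$, so $|\nabla\omega|^2/\omega$ is bounded; the real singularity is $|\nabla\omega|^2/\omega^2$.

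The missing device is the substitution $v=h\varphi$ with $h=\rho^{\gamma/2}$, which gives $\int\rho^\gamma|\nabla\varphi|^2=\int|\nabla v|^2+\int\frac{\Delta h}{h}v^2$. This potential is favourable near the origin, and its negative part satisfies $W\le C_\gamma[(\rho-1)_++d^{-1}]$. Only then can you combine Donsker--Varadhan, log-Sobolev for $v$, and Hoeffding's exponential-moment bound to get $\int Wv^2\le C_\gamma d^{-1/2}(\int|\nabla v|^2+\int v^2)$. You must also transfer the zero mean of $\varphi$ to $v$, namely $\overline v^{\,2}\le\delta_d\int v^2$ with $\delta_d\to0$. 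This uses the small-ball estimate $\mu(\rho\le r)\le(C_0r)^{d/2}$ together with Hoeffding. Without these ingredients, $\liminf_{d\to\infty}\C_{m,d}/d^m\ge2^m$ is not established.
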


Following \cite{G}, we will convert the discrete estimates on $\Z^d$ into some equivalent integral inequalities on the flat torus $\T^d$. Since we always consider $2\pi$-{\bf periodic} functions, with a slight abuse of notation, we also denote $\mathbb{T}^d=[{-\pi},\pi]^d$ and define
\begin{align*}
C^k(\mathbb{T}^d)= \Big\{\varphi \in C^k(\mathbb{R}^d)\big|\;\varphi(x+2\pi e_j)=\varphi(x),~\forall ~x\in\mathbb{R}^d,~1\leq j\leq d\Big\}.
\end{align*}
We denote by $\dot{C^k}(\mathbb{T}^d)$ the subspace of $C^k(\mathbb{T}^d)$ with zero average, i.e.
\begin{align*}
\dot{C^k}(\mathbb{T}^d)= \Big\{\varphi \in C^k(\mathbb{T}^d),\; \overline \varphi := \int_{\mathbb{T}^d} \varphi dx=0\Big\}.
\end{align*}

\medskip
Using Fourier analysis, we have the following lemma, see Lemmas 2.2 and 2.3 in \cite{G} for instance. Let ${\mathcal D}^\ell$ denote the discrete $\ell$-th order differential operator defined previously, and $D^\ell$ denote the continuous $\ell$-th order differential operator in $\R^d$, that is, $D^\ell =\Delta^{\ell/2}$, if $\ell$ is even and $D^\ell =\nabla\Delta^{(\ell-1)/2}$ if $\ell$ is odd.
\begin{lem}\label{g}
Let $u\in \Ec(\mathbb{Z}^d)$ and $m \in \N$, then there exists $\Psi\in \dot C^\infty(\mathbb{T}^d)$ such that
\begin{align*}
\sum_{n\in\mathbb{Z}^d\setminus{\{0}\}} \frac{|u(n)|^2}{|n|^{2m}}=\int_{\mathbb{T}^d} |D^m\Psi(x)|^2 dx
\end{align*}
and
\begin{align*}
\sum_{n\in\mathbb{Z}^d} |\mathcal{D}^m u(n)|^2 = 4^{m}\int_{\mathbb{T}^d} |D^{2m}\Psi(x)|^2\omega^m dx,\quad \text{with}~\omega(x):= \sum_{j=1}^d {\Big(\sin\frac{x_j}{2}\Big)}^2.
\end{align*}
\end{lem}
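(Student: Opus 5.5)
The plan is to expand $u$ in Fourier series and define $\Psi$ by inverting $\Delta^m$ on the Fourier side; both identities then follow from Parseval's formula. Throughout, I take $\dd x$ on $\mathbb{T}^d$ to be the normalized measure $(2\pi)^{-d}\dd x$, matching the convention $\overline\varphi=\int_{\mathbb{T}^d}\varphi\,\dd x$. Otherwise both identities hold up to the same factor $(2\pi)^d$.

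For $u\in \Ec(\Z^d)$, set $\hat u(x)=\sum_{n\in\Z^d} u(n)e^{in\cdot x}$. This is a trigonometric polynomial, because $u$ has finite support. Its average is $u(0)=0$, so $\hat u\in \dot C^\infty(\mathbb{T}^d)$. I then define
\begin{align*}
\Psi(x)=(-1)^m\sum_{n\in\Z^d\setminus\{0\}}\frac{u(n)}{|n|^{2m}}\,e^{in\cdot x}.
\end{align*}
This is again a finite trigonometric sum with no constant term, so $\Psi\in\dot C^\infty(\mathbb{T}^d)$. Since $-\Delta e^{in\cdot x}=|n|^2e^{in\cdot x}$, we get $\Delta^m\Psi=\hat u$, that is, $D^{2m}\Psi=\hat u$.

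If one insists on real-valued functions, note that $u$ real implies $\overline{\Psi(x)}=\Psi(-x)$. One can then pass to the real part, or simply observe that all quantities involved are $L^2$ norms, for which complex values are harmless.

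\emph{First identity.} The operator $D^m$ acts on $e^{in\cdot x}$ by multiplication by the symbol $(-|n|^2)^{m/2}$ when $m$ is even, and by $i n(-|n|^2)^{(m-1)/2}$ (a vector) when $m$ is odd. In both cases the symbol has modulus $|n|^m$. Hence the $n$-th Fourier coefficient of $D^m\Psi$ has modulus $|u(n)|\,|n|^{m-2m}=|u(n)|/|n|^m$. Parseval's formula, applied componentwise when $m$ is odd, then gives
\begin{align*}
\int_{\mathbb{T}^d}|D^m\Psi|^2\dd x=\sum_{n\neq0}\frac{|u(n)|^2}{|n|^{2m}}.
\end{align*}

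\emph{Second identity.} The discrete operators are Fourier multipliers.

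The shift $u(n-e_j)$ has transform $e^{ix_j}\hat u$. So the $j$-th component of $\nabla u$ has symbol $1-e^{ix_j}$, with
\begin{align*}
|1-e^{ix_j}|^2=2-2\cos x_j=4\sin^2\frac{x_j}{2}.
\end{align*}

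The discrete Laplacian has symbol $\sum_j(2-2\cos x_j)=4\omega(x)$.

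Therefore $\mathcal D^m u$ has transform $(4\omega)^{m/2}\hat u$ when $m$ is even. When $m$ is odd it has transform $\big((1-e^{ix_j})(4\omega)^{(m-1)/2}\hat u\big)_j$, and summing the squared moduli over $j$ gives $(4\omega)^{m}|\hat u|^2$. In both cases Parseval yields
\begin{align*}
\sum_{n}|\mathcal D^m u(n)|^2=\int_{\mathbb{T}^d}(4\omega)^m|\hat u|^2\dd x=4^m\int_{\mathbb{T}^d}|D^{2m}\Psi|^2\omega^m\dd x.
\end{align*}

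The only point requiring care is the bookkeeping. One must use the same normalization of the measure in both identities, and check the odd case through the componentwise sum $\sum_j|1-e^{ix_j}|^2=4\omega$. The rest is immediate from the finiteness of the Fourier sums.
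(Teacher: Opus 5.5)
Your proof is correct and is essentially the argument the paper relies on, via Gupta's Lemmas 2.2--2.3: both identities are Parseval applied to Fourier multipliers. Your $\Psi$ is the Fourier series of $u(n)/|n|^{2m}$ up to a unimodular constant, exactly as in \eqref{Psim}. The factor $(2\pi)^{-d/2}$ in $\mathcal F$ there is your rescaling for the Lebesgue measure $dx$, which is what the paper actually uses.

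One correction: drop the suggestion to ``pass to the real part''. $\mathrm{Re}\,\Psi$ symmetrizes the Fourier coefficients and does not satisfy the identities: for $u=\delta_{e_1}$, $m=1$, both right-hand sides are halved, and in general they change by different factors. Your other justification, that a complex-valued $\Psi$ is harmless, is the right one and is what the paper does, since its $\Psi$ in \eqref{Psim} is complex in general.
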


Consequently, to prove Theorem \ref{main}, we need only to consider the corresponding estimates over $\dot{C}^k(\T^d)$. More generally, we determine the asymptotic behaviors as follows.
\begin{thm}\label{th3}
Let $\ell, k $ be nonnegative integers with $0 < k - \ell \le \gamma \in \N$. Let $C_{d, k, \ell, \gamma}$ be the best constant for the following weighted Hardy-Rellich inequalities: 
\begin{align*}
  \int_{\T^d} |D^k\phi|^2 \omega^\gamma dx \ge C_{d, k,\ell, \gamma} \int_{\T^d} |D^{\ell}\phi|^2 \omega^{\gamma -k + \ell}dx, \quad \mbox{ for all } \phi \in \dot{C}^k(\T^d).
\end{align*}
Then
\begin{align}\label{mainc}
\lim_{d\to\infty} \frac{C_{d, k, \ell, \gamma}}{d^{k-\ell}} = 2^{\ell-k}.
\end{align}
\end{thm}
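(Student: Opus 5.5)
Throughout I write $C_{k,\ell,\gamma}$ for $C_{d,k,\ell,\gamma}$. The plan is to reduce to the one-step case $k-\ell=1$ and to establish for it
\begin{align*}
\liminf_{d\to\infty} \frac{C_{d,k,k-1,\gamma}}{d}\ \ge\ \frac12, \qquad \limsup_{d\to\infty} \frac{C_{d,k,k-1,\gamma}}{d}\ \le\ \frac12,
\end{align*}
uniformly in the finitely many relevant $(k,\gamma)$ with $\gamma\ge 1$.

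For the lower bound, iterate the single-order inequality along the chain
\begin{align*}
\int |D^k\phi|^2\omega^\gamma \ \ge\ C_{k,k-1,\gamma}\int |D^{k-1}\phi|^2\omega^{\gamma-1}\ \ge\ \cdots \ \ge\ \int |D^\ell\phi|^2\omega^{\gamma-k+\ell}\prod_{j=0}^{k-\ell-1} C_{k-j,k-j-1,\gamma-j}.
\end{align*}
All exponents $\gamma-j$ stay $\ge 1$ at every step because $k-\ell\le\gamma$. This gives $\liminf C_{k,\ell,\gamma}/d^{k-\ell}\ge 2^{\ell-k}$.

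For the upper bound, do not iterate. Instead take one family of test functions concentrated at high frequencies. Write $\phi=e^{iN\cdot x}\psi$, or a real part of such, with $|N|$ large, or choose $\phi$ with Fourier support near a single large frequency. Then $|D^k\phi|^2\approx |N|^{2(k-\ell)}|D^\ell\phi|^2$ pointwise to leading order. The ratio of the two sides therefore becomes
\begin{align*}
|N|^{2(k-\ell)}\,\frac{\int|\psi|^2\omega^\gamma}{\int|\psi|^2\omega^{\gamma-k+\ell}}.
\end{align*}
To make this small, localize $\psi$ where $\omega$ is small, i.e.\ near $x=0$. However, near $0$ we have $\omega\approx|x|^2/4$, and $|N|$ must dominate the localization scale, so the two effects balance. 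Optimizing over this balance reproduces a continuum Hardy–Rellich quotient, which scales like $d^{2(k-\ell)}$ and is too large.

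So the right test functions are \emph{not} localized. Since $\omega$ concentrates near its mean $d/2$ (Hoeffding: $\omega=\sum\sin^2(x_j/2)$ is a sum of independent bounded variables under the uniform measure on $\T^d$), take $\phi$ a low-mode trigonometric polynomial, e.g.\ $\phi=\sum_j \cos x_j$ or $\cos x_1$. Then $|D^k\phi|^2=|D^\ell\phi|^2$ exactly for the single mode $\cos x_1$ (all frequencies have modulus one). The ratio is $\int\cos^2x_1\,\omega^\gamma\big/\int\cos^2x_1\,\omega^{\gamma-k+\ell}$. By concentration, $\omega=d/2+O(\sqrt d)$ on most of the mass, so this ratio is $(d/2)^{k-\ell}(1+o(1))$, giving $\limsup C_{k,\ell,\gamma}/d^{k-\ell}\le 2^{\ell-k}$. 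The corrections come from moments: $\int \omega^p\cos^2x_1 = (d/2)^p(1+O(1/d))$ for fixed $p$, which can be computed directly by expanding.

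The main obstacle is the single-step lower bound $C_{k,k-1,\gamma}\ge (\tfrac12-o(1))d$, i.e.
\begin{align*}
\int |D^k\phi|^2\omega^\gamma \ \ge\ \Big(\tfrac d2-o(d)\Big)\int|D^{k-1}\phi|^2\omega^{\gamma-1}.
\end{align*}
Heuristically $\omega\approx d/2$ and $|D^k\phi|\ge|D^{k-1}\phi|$ in Fourier for zero-mean $\phi$, since all nonzero frequencies have modulus at least one. The weight, however, breaks the Fourier diagonalization. The plan is a weighted identity: write $D^k=\mathrm{div}$ or $\nabla$ applied to $D^{k-1}$, set $v=D^{k-1}\phi$ (a zero-mean scalar or vector field), and prove the first- and second-order weighted Hardy identities. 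For instance,
\begin{align*}
\int|\nabla v|^2\omega^\gamma=\int |\nabla(v\omega^{\gamma/2})|^2 + \int v^2\,\omega^{\gamma/2}\bigl(-\Delta\omega^{\gamma/2}\bigr),
\end{align*}
and analogously for $\Delta$. One combines these with $\int|\nabla w|^2\ge\int|w-\bar w|^2$.

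The error terms involve $|\nabla\omega|^2/\omega$ and $\Delta\omega$, which equal $O(1)$ pointwise relative to $d$ except where $\omega$ is small. The region $\{\omega<(1/2-\delta)d\}$ has exponentially small measure by Hoeffding. Controlling the mean $\overline{v\omega^{\gamma/2}}$ and the bad region needs an entropy (Gibbs variational) inequality, $\int f\,\log(\cdot)\le\ldots$, to bound $\int v^2 1_{\text{bad}}\omega^{\gamma-1}$ by $o(1)\int v^2\omega^{\gamma-1}$ plus gradient terms. This uniform-in-$d$ control is where I expect most of the work.
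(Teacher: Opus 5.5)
Your architecture matches the paper's: the lower bound chains one-step constants $C_{d,i+1,i,\eta}$ with $\eta\ge 1$, and the upper bound uses a single mode (the paper uses $\sin x_1$). One slip is harmless. $|D^k\cos x_1|^2$ is $\cos^2 x_1$ or $\sin^2 x_1$ according to the parity of $k$, so the two integrands are not equal when $k-\ell$ is odd. But both $\int_{\T^d}\cos^2x_1\,\omega^p\,dx$ and $\int_{\T^d}\sin^2x_1\,\omega^p\,dx$ equal $(d/2)^p\frac{(2\pi)^d}{2}(1+O(d^{-1}))$, which is all you need.

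\textbf{The gap is the one-step bound itself}, i.e.\ the paper's \eqref{general1}--\eqref{general2}. It carries all the difficulty, your sketch does not establish it, and the route you describe does not close. First, the sign in your identity is wrong: with $w=v\omega^{\gamma/2}$ one has $\int|\nabla v|^2\omega^\gamma=\int|\nabla w|^2+\int v^2\omega^{\gamma/2}\Delta\omega^{\gamma/2}$. Combined with Poincar\'e for $\mu$ and $|\nabla\omega|^2\le\omega$, this gives
\[
\int_{\T^d}|\nabla v|^2\omega^\gamma d\mu\ \ge\ \int_{\T^d} v^2\omega^{\gamma-1}\Big[\frac{\gamma}{2}\cdot\frac d2+\Big(1-\frac{\gamma}{2}\Big)\omega\Big]d\mu-C_\gamma\int_{\T^d} v^2\omega^{\gamma-1}d\mu-\overline{w}^2 .
\]
This computation naturally produces the same-weight quantity $\int v^2\omega^\gamma$, not $\frac d2\int v^2\omega^{\gamma-1}$.

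The case $\gamma=1$ is unavoidable, since it is the last link of every chain with $k-\ell=\gamma$ (e.g.\ $C_{d,2m,m,m}$ in Theorem \ref{main}). There the bracket is $\frac d4+\frac\omega2$, so the bound degenerates to $\frac d4$ when $v^2$ lives near the origin. To upgrade it you would need $\int v^2(\frac d2-\omega)_+\le o(d)\int v^2+o(1)\int|\nabla v|^2\omega$, and entropy cannot give this. Log-Sobolev produces the unweighted $\int|\nabla v|^2$. If instead you put the weight into $f=w$, you need $\E_\mu e^{sW_0}$ with $W_0\sim d/(2\omega)$, which is infinite. Near $x=0$ the unweighted energy is not controlled by the degenerate energy $\int|\nabla v|^2\omega$.

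\textbf{The missing idea is the paper's two-step mechanism.}

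(i) Your computation, with the correct sign, is exactly the paper's proof of the same-weight bound \eqref{gap}, $\int|\nabla\varphi|^2\omega^\gamma\ge(1-o_d(1))\int\varphi^2\omega^\gamma$. There the singular part of $\Delta h/h$ near $\rho=0$ is favorable and simply dropped. Entropy plus Hoeffding is needed only for $(\rho-1)_+$, and the mean is handled by $\int(1-\rho^{-\gamma/2})^2d\mu\to0$ via a small-ball volume estimate.

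(ii) The factor $\frac d2$ then comes from \eqref{hardy1} with $V=\omega^\gamma$ and $f=\omega^{-\sqrt d}$. Its potential is pointwise at least $\sqrt d(\frac d2-\sqrt d+\gamma-1)\omega^{\gamma-1}-\sqrt d\,\omega^\gamma$ everywhere, including near the origin. Absorbing the $\omega^\gamma$ term by (i) yields $\frac{\sqrt d(d/2-\sqrt d+\gamma-1)}{1+\sqrt d(1+o_d(1))}=\frac d2(1-o_d(1))$.

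For even $k$, ``analogously for $\Delta$'' also hides real work. $\int|{\rm div}\,v|^2\omega^\gamma$ controls nothing for general zero-mean fields. For $v=\nabla u$, the weighted Bochner formula produces $-\int\Delta(\omega^\gamma)|\nabla u|^2$, whose integrand near the origin is about $-\gamma\frac d2\omega^{\gamma-1}|\nabla u|^2$, the size of the main term. The paper handles this with three ingredients:
\begin{itemize}
\item the new Rellich identity \eqref{rellich} with the same $f=\omega^{-\sqrt d}$, which turns the coefficient of $\omega^{\gamma-1}\Delta\omega$ into $\sqrt d-\gamma>0$;
\item a bound on the Hessian-of-$V$ term;
\item the second-order same-weight estimate \eqref{gap1}.
\end{itemize}
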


To handle \eqref{mainc}, the main technical challenge lies in the anisotropic nature of the weights $\omega^\gamma$, and we need to manage the strong coupling between different directions. The crucial arguments are the following sharp estimates.
\begin{prop}\label{lower12}
Let $\gamma$ be a fixed positive integer. The following estimates hold
\begin{align}
\label{gap}
\int_{\T^d}|\nabla \varphi|^2 \omega^\gamma dx \ge \Big(1 - o_d(1)\Big) \int_{\T^d} \varphi^2 \omega^\gamma dx, \quad \forall\; \varphi \in \dot C^1(\T^d);
\end{align}
\begin{align}\label{general1}
  \int_{\T^d} |\nabla\varphi|^2 \omega^{\gamma} dx \ge \frac{d}{2}\Big(1 - o_d(1)\Big) \int_{\T^d} \varphi^2 \omega^{\gamma-1}dx, \quad \forall\;  \varphi \in \dot{C}^1(\T^d);
\end{align}
and 
\begin{align}
\label{gap1}
\int_{\T^d} |\Delta \varphi|^2 \omega^\gamma dx \ge \Big(1 - o_d(1)\Big) \int_{\T^d} |\nabla\varphi|^2 \omega^\gamma dx,\quad \forall\; \varphi\in\dot C^2(\T^d);
\end{align}
\begin{align}\label{general2}
  \int_{\T^d} |\Delta\varphi|^2 \omega^{\gamma} dx \ge \frac{d}{2}\Big(1 - o_d(1)\Big) \int_{\T^d} |\nabla\varphi|^2 \omega^{\gamma-1}dx, \quad \forall\;  \varphi \in \dot C^2(\T^d).
\end{align}
\end{prop}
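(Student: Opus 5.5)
The plan is to derive all four estimates from weighted ground-state identities, with probabilistic concentration of $\omega$ controlling the error terms. The basic tool is the first order weighted identity. For $\varphi=gv$ with $g>0$ and weight $W$, integration by parts on the torus (no boundary terms) gives
\[
\int |\nabla\varphi|^2W\,dx=\int g^2|\nabla v|^2W\,dx-\int \frac{\mathrm{div}(W\nabla g)}{g}\varphi^2\,dx .
\]
More simply, for any vector field $F$ we have $\int|\nabla\varphi|^2W\ge\int(-\mathrm{div}F-|F|^2/W)\varphi^2$.

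For \eqref{general1} I will take $W=\omega^\gamma$ and $F=-t\omega^{\gamma-1}\nabla\omega$. A direct computation uses
\[
\nabla\omega=\tfrac12(\sin x_j)_j,\qquad \Delta\omega=\tfrac12\sum_j\cos x_j=\tfrac d2-\omega,\qquad |\nabla\omega|^2=\omega-\sum_j\sin^4\tfrac{x_j}{2}.
\]
It yields the weight
\[
t\,\omega^{\gamma-1}\Big(\tfrac d2-\omega\Big)+\big(t(\gamma-1)-t^2\big)\omega^{\gamma-2}|\nabla\omega|^2 .
\]
Since $|\nabla\omega|^2\le\omega$, the choice $t=1$ (or $t$ slightly smaller) gives the lower bound $\omega^{\gamma-1}\big(\tfrac d2-c_\gamma\omega\big)$. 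The remaining task is to absorb the negative part $c\int\omega^\gamma\varphi^2$. For this I will use \eqref{gap}, which controls it by $c\int|\nabla\varphi|^2\omega^\gamma$. Rearranging, we get $(1+c)\int|\nabla\varphi|^2\omega^\gamma\ge\tfrac d2\int\omega^{\gamma-1}\varphi^2$, which is not sharp as it stands.

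To get sharpness, I will exploit concentration. Under the uniform measure $\omega$ is a sum of i.i.d.\ bounded variables with mean $d/2$, so Hoeffding gives $|\omega-d/2|\le d^{1/2+\epsilon}$ outside a set of exponentially small measure. On the bulk, then, $\tfrac d2-c\omega\approx(\tfrac12-c)d$, which is also not sharp. So I would instead split as follows: on $\{\omega\le\delta d\}$ the identity is nearly sharp, while for $\omega$ near $d/2$ we have $\omega^{\gamma-1}\approx\frac{2}{d}\omega^\gamma$. Hence \eqref{general1} follows from \eqref{gap} on the bulk region: there $\int\varphi^2\omega^{\gamma-1}\approx\frac2d\int\varphi^2\omega^\gamma\le\frac2d(1+o(1))\int|\nabla\varphi|^2\omega^\gamma$. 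This suggests \eqref{gap} is the core estimate, and \eqref{general1} follows by combining it with the identity on the region where $\omega$ is small. To patch the two regions I would use a cutoff in $\omega$, whose gradient costs $O(d^{-1/2})$ relatively.

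For \eqref{gap}, the unweighted version is the spectral gap $\int|\nabla\varphi|^2\ge\int\varphi^2$ for mean-zero functions. The weight $\omega^\gamma$ is essentially the constant $(d/2)^\gamma$ up to relative fluctuations $O(d^{-1/2})$ except on exponentially small sets. Concretely, I would write $\psi=\varphi\,\omega^{\gamma/2}$ and expand $|\nabla\psi|^2$. The error terms carry $\nabla\omega/\omega$, of size $\sqrt{\omega}/\omega\sim d^{-1/2}$ in the bulk, so they are $o(1)$. The difficulty is that $\psi$ need not have mean zero, and the set $\{\omega\ \text{small}\}$ must be treated separately. For the mean I would bound $\bar\psi^2\le\|\varphi\|^2_{L^2(\omega^\gamma)}\cdot\int(\omega^{\gamma/2}-\overline{\omega^{\gamma/2}})^2/\ldots$ using $\bar\varphi=0$, i.e.\ $\bar\psi=\int\varphi(\omega^{\gamma/2}-c)$ for any constant $c$. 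The variance of $\omega^{\gamma/2}$ relative to its mean squared is $O(1/d)$ by concentration, although controlling it against the weighted norm $\int\varphi^2\omega^\gamma$ needs care where $\omega$ is small. This is where an entropy or Herbst-type bound is needed: sets of small $\omega$ have tiny measure, and the entropy inequality $\int\varphi^2 f\le \epsilon\,\mathrm{Ent}(\varphi^2)+\ldots$ combined with a log-Sobolev inequality on the torus controls mass there. I expect this step, uniform control of $\varphi$ on the region where $\omega^\gamma$ degenerates, to be the main obstacle.

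For \eqref{gap1} and \eqref{general2}, I will apply the first order results to $\partial_j\varphi$, which has mean zero, and sum over $j$. This gives $\int|D^2\varphi|^2\omega^\gamma\ge(1-o(1))\int|\nabla\varphi|^2\omega^\gamma$. It remains to compare $\int|\Delta\varphi|^2\omega^\gamma$ with $\int|D^2\varphi|^2\omega^\gamma$. Integrating by parts twice gives
\[
\int|\Delta\varphi|^2\omega^\gamma=\int|D^2\varphi|^2\omega^\gamma+\text{terms with }\nabla\omega^\gamma,\ D^2\omega^\gamma\ \text{paired with }\nabla\varphi\otimes\nabla\varphi .
\]
Here $D^2\omega=\tfrac12\mathrm{diag}(\cos x_j)$ is bounded, so these terms are $O(\omega^{\gamma-1})|\nabla\varphi|^2$ plus cross terms, which Cauchy--Schwarz absorbs. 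By \eqref{general1} applied to $\nabla\varphi$, they are $O(1/d)$ relative to the main term, hence $o(1)$. \eqref{general2} then follows in the same way from \eqref{general1} for $\partial_j\varphi$.
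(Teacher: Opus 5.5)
\textbf{The gap is in \eqref{general1}.} Your field $F=-t\,\omega^{\gamma-1}\nabla\omega$ produces exactly the weight of the paper's identity \eqref{hardy1} with $V=\omega^\gamma$, $f=\omega^{\alpha}$, $\alpha=-t$. But you fix $t=1$, note the constant-factor loss, and switch to a region-splitting/cut-off scheme that does not close. First, \eqref{gap} is a global inequality that needs $\overline{\varphi}=0$, and the cut-off pieces lose this. Second, on the intermediate region $\{\delta d<\omega<(1-\epsilon)\frac d2\}$ neither the identity (pointwise gain $\approx\frac d2-\omega$) nor \eqref{gap} (gain $\omega$) is close to $\frac d2$, and nothing prevents $\varphi$ from concentrating there. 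No localization is needed. The missing idea is to let $t\to\infty$ with $t=o(d)$. For $t\ge\gamma-1$, the bound $|\nabla\omega|^2\le\omega$ gives the pointwise lower bound $t(\frac d2+\gamma-1-t)\omega^{\gamma-1}-t\omega^{\gamma}$ for your weight. Absorbing the last term with \eqref{gap} gives
\begin{align*}
\Big(1+t\big(1+o_d(1)\big)\Big)\int_{\T^d}\omega^\gamma|\nabla\varphi|^2dx\ge t\Big(\frac d2+\gamma-1-t\Big)\int_{\T^d}\omega^{\gamma-1}\varphi^2dx .
\end{align*}
With $t=\sqrt d$ the constant is $\frac d2(1-o_d(1))$. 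The gain $\frac{td}{2}$ is linear in $t$, the penalty $t^2$ is negligible, and the loss factor $\frac{1+t}{t}$ tends to $1$.

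\textbf{Two further points.} For \eqref{gap}, your skeleton (substitution $v=\omega^{\gamma/2}\varphi$, mean via $\overline\varphi=0$, Poincar\'e, log-Sobolev) matches the paper's, but the entropy argument is aimed at the wrong region. Integrating the cross term by parts gives \eqref{new11} with potential $\frac{\Delta h}{h}=\beta(\rho^{-1}-1)+\beta(\beta-1)\frac{|\nabla\rho|^2}{\rho^2}$, where $h=\rho^{\beta}$ and $\beta=\frac\gamma2$. This potential is not small pointwise, but it is favorable near $\rho=0$. By $|\nabla\rho|^2\le(2\rho-\rho^2)/d$, its negative part is at most $C_\gamma[(\rho-1)_++d^{-1}]$. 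This bounded term, nontrivial only where $\omega$ exceeds its mean, is what \eqref{ent}, \eqref{lemEnt} and Hoeffding's moment bound control, at cost $O(d^{-1/2})$. Entropy bounds need finite exponential moments, so they cannot absorb the singular factors near $\omega=0$ ($\omega^{-1}$ in your expanded error terms, $\rho^{-\beta}$ in the mean). The mean is handled instead by Cauchy--Schwarz, $\overline v^2\le\int(1-\rho^{-\beta})^2d\mu\int v^2d\mu$. The deterministic factor tends to $0$ by the small-ball bound $\mu(\rho\le r)\le(C_0r)^{d/2}$ together with Hoeffding.

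Your reduction of \eqref{gap1} and \eqref{general2} to the Hessian is a legitimate alternative to the paper's route (via \eqref{new1}, \eqref{new4} and the Rellich identity \eqref{rellich} with $\alpha=-\sqrt d$), and arguably simpler. However, the comparison step is wrong as written. After two integrations by parts, $\Delta(\omega^\gamma)$ contains $\gamma\omega^{\gamma-1}(\frac d2-\omega)$: the Hessian of $\omega$ is bounded, but its trace is not. So $\frac d2\int\omega^{\gamma-1}|\nabla\varphi|^2$ is in general comparable to $\int\omega^\gamma|\nabla^2\varphi|^2$ (already for $\varphi=\cos x_1$), not $O(1/d)$ smaller. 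The fix is to integrate by parts only once,
\begin{align*}
\int_{\T^d}\omega^\gamma|\Delta\varphi|^2dx=\int_{\T^d}\omega^\gamma|\nabla^2\varphi|^2dx+\int_{\T^d}\nabla(\omega^\gamma)\cdot\big(\nabla^2\varphi\,\nabla\varphi\big)dx-\int_{\T^d}\big(\nabla(\omega^\gamma)\cdot\nabla\varphi\big)\Delta\varphi\,dx,
\end{align*}
and then use $|\nabla(\omega^\gamma)|\le\gamma\omega^{\gamma-\frac12}$, Cauchy--Schwarz, and \eqref{general1} for each $\partial_j\varphi$. The relative error is then $O(d^{-1/2})$, and your argument goes through once \eqref{general1} itself is repaired as above.
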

Throughout the paper, $o_d(1)$ denotes a quantity tending to zero as $d$ goes to infinity, depending only on the fixed order parameters such as $\gamma,~k,~\ell$, but independent of the test functions.

\medskip

The estimates \eqref{gap} and \eqref{gap1} are elementary when $\gamma = 0$ as we know the spectrum of $-\Delta$ on the flat torus, but the weighted versions with $\gamma \ge 1$ require new arguments. Due to the anisotropy and strong coupling of the weights $\omega^\gamma$, the standard Bessel-pair approach fails to provide sharp estimates. 

Our strategy consists of two novel ingredients.
\begin{itemize}
\item We apply general Hardy-Rellich identities (see Lemma 2.1), including a new second order Rellich-type identity that couples the Hessian with a vector field via tensor
products, see \eqref{hardy2F}. These identities are crucial for handling the error terms produced by the anisotropic weights.
\item We use the probabilistic concentration estimates that exploit the interpretation of 
$$\rho := \frac{2\omega}{d} = \frac{1}{d}\sum_{j=1}^d (1-\cos x_j)$$
    as the empirical mean of i.i.d. random variables. This allows us to control the degenerate regions via Hoeffding's inequality as $d \to \infty$; and to handle entropy estimates uniformly in $d$ using the dimension-independent logarithmic Sobolev inequality on $\T^d$. See Lemma \ref{lemHoeff} and estimate \eqref{lemEnt}.
\end{itemize}

\medskip
The paper is organized as follows. 
Section 2 collects the weighted identities and probabilistic tools, including Hoeffding's estimate and the entropy inequality. 
Section 3 proves the fundamental weighted estimates on \(\T^d\) in Proposition \ref{lower12}. 
Section 4 establishes the asymptotically sharp torus inequalities and completes the proof of the main theorem for \(\Z^d\) by lifting the torus estimates back to the lattice.
\section{Preliminaries}
\reset

Here we first introduce two general identities with general weights. They will play an important role in estimating the lower-order terms produced by the anisotropic weights $\omega^\gamma$.
\begin{lem}
\label{HardyF}
Let $V\in C^1(\T^d)$ and $\vec F\in C^1(\mathbb{T}^d,\mathbb{R}^d)$. Assume that $\varphi \in  C^1(T^d)$, then
\begin{align}
\label{hardy1F}
\int_{\mathbb{T}^d}V |\nabla \varphi|^2 dx = \int_{\mathbb{T}^d} \Big[{\rm div}(V\vec F)-V |\vec F|^2\Big]\varphi^2 dx + \int_{\mathbb{T}^d} V\big|\nabla \varphi + \varphi\vec F \big|^2 dx.
\end{align}
Now let $ V\in C^2(\mathbb{T}^d)$, then for any $\varphi\in  C^2(\mathbb{T}^d)$, there holds
\begin{align}
\label{hardy2F}
\begin{split}
\int_{\mathbb{T}^d}V |\Delta \varphi|^2 dx & = \int_{\mathbb{T}^d} \Big[{\rm div}(V\vec F)-V|\vec F|^2-\Delta V\Big]|\nabla \varphi|^2 dx +\sum_{i,j=1}^d\int_{\mathbb{T}^d}V_{x_ix_j}\varphi_{x_i}\varphi_{x_j}dx\\
&\quad + \int_{\mathbb{T}^d}V \big|\nabla^2 \varphi + \vec F\otimes \nabla \varphi\big|^2 dx.
\end{split}
\end{align}
\end{lem}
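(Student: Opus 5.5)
The plan is to prove both identities by expanding the last square and integrating by parts once (for \eqref{hardy1F}) or twice (for \eqref{hardy2F}) on the torus. Periodicity of $V$, $\vec F$ and $\varphi$ makes every boundary term vanish, so all integrations by parts are exact. No previous result is needed.

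For \eqref{hardy1F}, expand
\[
V|\nabla\varphi+\varphi\vec F|^2 = V|\nabla\varphi|^2 + 2V\varphi\,\vec F\cdot\nabla\varphi + V|\vec F|^2\varphi^2 .
\]
The middle term equals $V\vec F\cdot\nabla(\varphi^2)$. Its integral is $-\int_{\T^d}{\rm div}(V\vec F)\varphi^2dx$. Substituting this and rearranging gives \eqref{hardy1F} directly. This step needs only $V\vec F\in C^1$.

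For \eqref{hardy2F}, expand the matrix square in the same way:
\[
\int_{\T^d} V|\nabla^2\varphi+\vec F\otimes\nabla\varphi|^2dx=\int_{\T^d} V|\nabla^2\varphi|^2dx+2\sum_{i,j}\int_{\T^d} VF_i\varphi_{x_j}\varphi_{x_ix_j}dx+\int_{\T^d} V|\vec F|^2|\nabla\varphi|^2dx.
\]
In the cross term, $\varphi_{x_j}\varphi_{x_ix_j}=\tfrac12\partial_{x_i}(\varphi_{x_j}^2)$. Summing over $j$, it becomes $\int_{\T^d} V\vec F\cdot\nabla|\nabla\varphi|^2dx=-\int_{\T^d}{\rm div}(V\vec F)|\nabla\varphi|^2dx$. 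This is just \eqref{hardy1F} applied componentwise to each $\varphi_{x_j}$.

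It then remains to prove the weighted Bochner-type identity
\[
\int_{\T^d} V|\Delta\varphi|^2dx=\int_{\T^d} V|\nabla^2\varphi|^2dx-\int_{\T^d}\Delta V|\nabla\varphi|^2dx+\sum_{i,j}\int_{\T^d} V_{x_ix_j}\varphi_{x_i}\varphi_{x_j}dx .
\]
Combining it with the expansion above yields \eqref{hardy2F}. I would prove it by writing $\int V\varphi_{x_ix_i}\varphi_{x_jx_j}$ and moving $\partial_{x_i}$ off $\varphi_{x_ix_i}$, then moving $\partial_{x_j}$ back, so as to reach $\int V\varphi_{x_ix_j}^2$ plus terms in which derivatives fall on $V$. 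These extra terms are $-\int V_{x_i}\varphi_{x_i}\varphi_{x_jx_j}+\int V_{x_j}\varphi_{x_i}\varphi_{x_ix_j}$.

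The first extra term is handled by integrating $\partial_{x_j}$ by parts once more. The second equals $\tfrac12\int V_{x_j}\partial_{x_j}(\varphi_{x_i}^2)=-\tfrac12\int V_{x_jx_j}\varphi_{x_i}^2$. Summing over $i,j$ produces $\sum_{i,j}\int V_{x_ix_j}\varphi_{x_i}\varphi_{x_j}$, and combining the two halves of $\Delta V$ terms gives $-\int\Delta V|\nabla\varphi|^2$.

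The main obstacle is bookkeeping of these second-derivative-of-$V$ terms, especially getting the coefficient of $\Delta V$ equal to $-1$. I would check it on the test case $V=$ quadratic (locally) or simply against the unweighted case $V\equiv1$, where the identity reduces to $\int|\Delta\varphi|^2=\int|\nabla^2\varphi|^2$. Regularity is not a concern: with $\varphi\in C^2$, the third derivatives that appear at intermediate steps can be avoided by density (approximate $\varphi$ by $C^\infty$ periodic functions), since both sides are continuous in the $C^2$ norm.
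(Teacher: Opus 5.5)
Your proposal is correct and follows essentially the paper's route: expand the square, turn the cross term into $-\int_{\T^d}{\rm div}(V\vec F)|\nabla\varphi|^2dx$, and close with the weighted Bochner identity obtained by repeated integration by parts. Your bookkeeping does give coefficient $-1$ on $\Delta V$ (note that the $V\equiv1$ sanity check could not catch an error there), and your density remark adds a little rigor, since the paper's computation silently uses third derivatives $\varphi_{x_ix_ix_j}$ of a $C^2$ function.
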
 

In particular, let $\vec F=-\frac{\nabla f}{f}$ and assume that supp$(\varphi) \subset\subset \{f \ne 0\}$, we obtain 
\begin{align}
\label{hardy1}
\int_{\mathbb{T}^d}V |\nabla \varphi|^2 dx = - \int_{\mathbb{T}^d} \frac{{\rm div}(V\nabla f)}{f} \varphi^2 dx + \int_{\mathbb{T}^d} V\Big|\nabla \varphi - \frac{\nabla f}{f} \varphi\Big|^2 dx
\end{align}
and
\begin{align}
\label{rellich}
\begin{split}
\int_{\mathbb{T}^d} V|\Delta \varphi|^2 dx & = -\int_{\mathbb{T}^d} \Big[\frac{{\rm div}(V\nabla f)}{f}+\Delta V\Big]|\nabla \varphi|^2 dx+\sum_{i,j=1}^d\int_{\mathbb{T}^d}V_{x_ix_j}\varphi_{x_i}\varphi_{x_j}dx \\
&\quad +\int_{\mathbb{T}^d} V\Big|\nabla^2 \varphi - \frac{\nabla f\otimes \nabla \varphi}{f}\Big|^2 dx.
\end{split}
\end{align}

\begin{rem}
The equalities \eqref{hardy2F} and \eqref{rellich} seem to be new Rellich-type identities, associating the first and second order energies through a tensor product coupling with a general vector field. Similar identities hold on a general domain $\Omega$ if we assume that $\varphi$ is compactly supported in $\Omega$.
\end{rem}

\medskip
\noindent
{\bf Proof.} The first order identities \eqref{hardy1F} and \eqref{hardy1} are somehow well known; they are obtained by developing the last integral in \eqref{hardy1F} and using integration by parts, where all boundary integrals cancel each other because of the periodicity assumption, see also \cite{HY}. So we omit the proof of \eqref{hardy1F} and \eqref{hardy1}.

\smallskip
Consider the second-order identity \eqref{hardy2F}. As $\nabla^2 \varphi = (\varphi_{x_ix_j})$ denotes the Hessian matrix of $\varphi$ and $\vec{F} \otimes \nabla\varphi$ is the tensor product matrix $(F_i  \varphi_{x_j})$,
\begin{align*}
\big|\nabla^{2} \varphi + \vec{F}\otimes \nabla  \varphi\big|^2 =\sum^d_{i,j=1} \big(\varphi_{x_ix_j}+ F_i  \varphi_{x_j}\big)^2=\sum_{i,j=1}^d  \varphi^2_{x_ix_j}+ 2\sum_{i,j=1}^d  \varphi_{x_ix_j} F_i  \varphi_{x_j}+\sum_{i,j=1}^d F^2_i  \varphi^2_{x_j}.
\end{align*}
Hence
\begin{align*}
\int_{\T^d} V |\nabla^{2} \varphi + \vec{F}\otimes \nabla  \varphi|^{2}dx & = \int_{\T^d}V|\nabla^{2} \varphi|^{2} dx + \int_{\T^d}V|\vec{F}|^{2}|\nabla  \varphi|^{2} dx+ \int_{\T^d}V\vec{F}\cdot\nabla \big(|\nabla  \varphi|^{2}\big)dx\\
 & = \int_{\T^d}V|\nabla^2  \varphi|^{2} dx+ \int_{\T^d}\big[V |\vec F|^2 - {\rm {div}}(V\vec F)\big]|\nabla\varphi|^{2}dx.
%- \sum_{i,j = 1}^{n}\int_{\Omega}u_{x_{i}}u_{x_{j}}V_{x_{i}x_{j}}
%+ \int_{\Omega}\bigl(V|\vec{F}|^{2} - {\rm{div}}(V\vec{F})\bigr)|\nabla u|^{2}.
\end{align*}
On the other hand, we have the following weighted Bochner type identity
\begin{align*}
\int_{\T^d} V |\nabla^2  \varphi|^2dx = \int_{\T^d} V|\Delta  \varphi|^2 dx-\sum_{i,j} \int_{\T^d} V_{x_ix_j}  \varphi_{x_i}  \varphi_{x_j} dx  + \int_{\T^d} \Delta V |\nabla  \varphi|^2 dx.
\end{align*}
For completeness, we give a detailed proof here and notice that no boundary term appears due to the periodicity.
\begin{align*}
\int_{\T^d} V |\nabla^2  \varphi|^2dx & = \sum_{i,j}\int_{\T^d} V  \varphi^2_{x_ix_j} dx\\
& = -\sum_{i,j}\int_{\T^d} V_{x_i}  \varphi_{x_j}  \varphi_{x_ix_j} dx-\sum_{i,j}\int_{\T^d} V  \varphi_{x_j}  \varphi_{x_ix_ix_j}dx\\
& = -\sum_{i,j}\int_{\T^d} V_{x_i}  \varphi_{x_j}  \varphi_{x_ix_j} dx + \int_{\T^d} {\rm div}\big(V\nabla\varphi) \Delta\varphi dx\\
& = -\sum_{i,j}\int_{\T^d} V_{x_i}  \varphi_{x_j}  \varphi_{x_ix_j} dx + \int_{\T^d} V|\Delta  \varphi|^2 dx -\sum_{i,j}\int_{\T^d}  \varphi_{x_i}\big(V_{x_j}\varphi_{x_j}\big)_{x_i} dx\\
& = -2\sum_{i,j}\int_{\T^d} V_{x_i}  \varphi_{x_j}  \varphi_{x_ix_j} dx + \int_{\T^d} V|\Delta  \varphi|^2 dx -\sum_{i,j} \int_{\T^d} V_{x_ix_j}\varphi_{x_i}  \varphi_{x_j} dx\\
& = -\int_{\T^d} \nabla V\cdot\nabla\big(|\nabla \varphi|^2\big) dx + \int_{\T^d} V|\Delta  \varphi|^2 dx -\sum_{i,j} \int_{\T^d} V_{x_ix_j}\varphi_{x_i}  \varphi_{x_j} dx\\
& = \int_{\T^d} V|\Delta  \varphi|^2 dx-\sum_{i,j} \int_{\T^d} V_{x_ix_j}  \varphi_{x_i}  \varphi_{x_j} dx  + \int_{\T^d} \Delta V |\nabla  \varphi|^2 dx.
\end{align*}
Combining the above equalities, we readily obtain \eqref{hardy2F}.  The formula \eqref{rellich} follows by taking $\vec F=-\frac{\nabla f}{f}$. This completes the proof. \qed

\medskip
As mentioned, we also make use of some inequalities from probability theory. The first one is the following very special case of Hoeffding's estimate (see \cite{BLM} or \cite{Ho}).
\begin{lem}
\label{lemHoeff}
Let $(Y_i)_{1\le i\le d}$ be a family of independent random variables on the probability space $(\Omega, \mu)$, with $\E_\mu(Y_i) = 0$, and $a \le Y_i \le b$. 
Let $\overline Y = \frac{1}{d}\sum_i Y_i$, we have then
\begin{align}
\label{Hoefg}
P\big(|\overline Y| \ge t\big) \le 2e^{-\frac{2dt^2}{(b-a)^2}}, \quad \forall\; t > 0
\end{align}
and
\begin{align*}
\E_\mu\big(e^{s\overline Y}\big) \le e^{\frac{s^2(b-a)^2}{8d}},\quad \forall~s\in\R.
\end{align*}
In particular, if $Y_i\in[-1,1]$, there holds
\begin{align}
\label{Hoeff}
\E_\mu\big(e^{s\overline Y}\big) \le e^\frac{s^2}{2d}, \quad \forall\; s\in \R.
\end{align}
\end{lem}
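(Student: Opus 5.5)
The proof is the classical Chernoff--Hoeffding argument. I would first prove the moment generating function bound for a single bounded centered variable (Hoeffding's lemma), and then obtain everything else from independence and Markov's inequality.

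\emph{Step 1 (Hoeffding's lemma).} Let $Y$ satisfy $\E_\mu(Y)=0$ and $a\le Y\le b$. By convexity of $y\mapsto e^{sy}$ on $[a,b]$,
\[
e^{sY}\le \frac{b-Y}{b-a}e^{sa}+\frac{Y-a}{b-a}e^{sb}.
\]
Taking expectations gives $\E_\mu(e^{sY})\le e^{L(h)}$, where
\[
h=s(b-a),\qquad p=\frac{-a}{b-a}\in[0,1],\qquad L(h)=-hp+\log\big(1-p+pe^{h}\big).
\]
One checks that $L(0)=L'(0)=0$. Moreover $L''(h)=q(1-q)\le \frac14$, where $q=pe^h/(1-p+pe^h)$. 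Taylor's formula then yields $L(h)\le h^2/8$, that is,
\[
\E_\mu(e^{sY})\le e^{s^2(b-a)^2/8}.
\]
This calculus inequality is the only non-routine point, and it is standard.

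\emph{Step 2 (MGF of the mean).} By independence,
\[
\E_\mu\big(e^{s\overline Y}\big)=\prod_{i=1}^d \E_\mu\big(e^{(s/d)Y_i}\big)\le \prod_{i=1}^d e^{s^2(b-a)^2/(8d^2)}=e^{s^2(b-a)^2/(8d)}.
\]
This is the second claim. Taking $a=-1$ and $b=1$ gives $(b-a)^2/8=1/2$, which is exactly \eqref{Hoeff}.

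\emph{Step 3 (tail bound).} For $s>0$, Markov's inequality gives
\[
P(\overline Y\ge t)\le e^{-st}\,\E_\mu\big(e^{s\overline Y}\big)\le \exp\Big(-st+\frac{s^2(b-a)^2}{8d}\Big).
\]
Optimizing with $s=4dt/(b-a)^2$ yields $e^{-2dt^2/(b-a)^2}$. Applying the same argument to $-Y_i$, which lie in $[-b,-a]$, an interval of the same length, bounds $P(\overline Y\le -t)$ identically. A union bound then gives \eqref{Hoefg} with the factor $2$.
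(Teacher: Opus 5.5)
Your proof is correct: it is the classical Chernoff--Hoeffding argument, namely Hoeffding's lemma via convexity and $L''=q(1-q)\le \frac14$, then tensorization of the moment generating function by independence, then Markov's inequality with the optimal $s=4dt/(b-a)^2$ and a union bound over the two tails. The paper gives no proof of its own and only cites Hoeffding's original paper and the book of Boucheron--Lugosi--Massart, which give essentially this standard argument.
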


Another useful estimate followa from the Donsker-Varadhan variational formula for entropy, see for instance \cite{Le}. With abuse of notation, we denote by $d\mu = (2\pi)^{-d}dx$ the standard uniform probability measure on the flat torus $\T^d$.
\begin{lem}
For any $f\in C^1(\T^d)$, any nonnegative continuous function $W_0$, and any $s>0$, there holds
\begin{align}
\label{ent}
\E_\mu(W_0f^2) \le
\frac1s\Ent_\mu(f^2)
+ \frac1s
\Big(\ln \int_{\T^d} e^{sW_0}d\mu\Big) \E_\mu(f^2).
\end{align}
\end{lem}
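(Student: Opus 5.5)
We prove \eqref{ent} from the Donsker--Varadhan (Gibbs) variational formula for relative entropy. Normalization is immediate: both sides of \eqref{ent} are homogeneous of degree one in $f^2$, and the case $\E_\mu(f^2)=0$ is trivial since then $f\equiv 0$. We may therefore assume $\E_\mu(f^2)=1$. Then $d\nu := f^2\, d\mu$ is a probability measure on $\T^d$, absolutely continuous with respect to $\mu$, and its relative entropy is
$$H(\nu\,|\,\mu)=\E_\mu\big(f^2\ln f^2\big)=\Ent_\mu(f^2).$$
The last equality uses $\Ent_\mu(g)=\E_\mu(g\ln g)-\E_\mu(g)\ln \E_\mu(g)$ with $\E_\mu(g)=1$.

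The key step is the entropy inequality: for every bounded measurable $h$,
$$\E_\nu(h)\le H(\nu\,|\,\mu)+\ln \E_\mu\big(e^{h}\big).$$
To prove it, set $d\mu_h := e^h\, d\mu/Z$ with $Z=\E_\mu(e^h)$. Then
$$0\le H(\nu\,|\,\mu_h)=\E_\nu\big(\ln f^2 - h + \ln Z\big)=H(\nu\,|\,\mu)-\E_\nu(h)+\ln Z,$$
by nonnegativity of relative entropy, which is Jensen's inequality applied to the convex function $t\ln t$. The regions where $f=0$ cause no trouble, since $0\ln 0=0$ and they carry no $\nu$-mass. Continuity of $W_0$ on the compact torus ensures that $h = sW_0$ is bounded, so all quantities are finite.

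Applying the inequality with $h=sW_0$ gives
$$s\,\E_\mu(W_0 f^2)=\E_\nu(sW_0)\le \Ent_\mu(f^2)+\ln\int_{\T^d}e^{sW_0}\,d\mu.$$
Dividing by $s>0$ and undoing the normalization, i.e.\ multiplying the logarithmic term by $\E_\mu(f^2)$ and noting that $\Ent_\mu$ is 1-homogeneous, yields \eqref{ent}.

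No real obstacle is expected. The only care needed is in the homogeneity bookkeeping: in the normalized form, $\Ent_\mu(f^2)$ already appears with coefficient one, and only the log-moment term picks up the factor $\E_\mu(f^2)$ when we rescale.
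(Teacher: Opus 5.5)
Your proof is correct and takes the same route as the paper. The paper gives no argument beyond invoking the Donsker--Varadhan variational formula for entropy (citing Ledoux); your normalization to $\E_\mu(f^2)=1$ followed by the Gibbs inequality from $H(\nu\,|\,\mu_h)\ge 0$, applied with $h=sW_0$, is exactly the standard derivation of that formula.
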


We recall also the following uniform entropy estimate given by Gross \cite{Gro}. 
\begin{align}
\label{lemEnt}
\Ent_\mu(f^2) \le C_{LS} \int_{\T^d} |\nabla f|^2 d\mu, \quad \forall\; f \in H^1(\T^d).
\end{align}
Notice that $C_{LS}>0$ is independent of $d$, and $\Ent$ stands for the entropy defined by 
\begin{align*}
\Ent_\mu(f^2)=
\int_{\T^d} f^2
\ln\frac{f^2}{\E_\mu(f^2)} d\mu.
\end{align*}
In other words, the best constant of the log-Sobolev inequality over the flat torus $\T^d$ is independent of the dimension $d$. 

\section{Key estimates}
\reset
In this section, we will prove the estimates in Proposition \ref{lower12}. 

\subsection{Proof of \eqref{gap}} The fundamental step is to establish \eqref{gap}. We denote $d\mu = (2\pi)^{-d}dx$ the probability measure on $\T^d$, and
$$\rho = \frac{2\omega}{d} = 1- \frac1d\sum_{j=1}^d \cos x_j.$$
Direct calculations give
\begin{align} \label{estomega}
|\nabla \omega(x)|^2=\omega(x)-\sum_{j=1}^d \Big(\sin\frac{x_j}{2}\Big)^4 \leq \omega-\frac{\omega^2}{d},\quad\Delta \omega(x)=\frac{d}{2}-\omega.
\end{align}
Hence we have
\begin{align}
\label{estrho}
\rho \in [0, 2], \quad |\nabla\rho|^2 \le \frac{2\rho-\rho^2}{d}, \quad \Delta\rho=1-\rho.
\end{align}

Let $\varphi \in \dot{C}^1(\T^d)$. Denote $v = h\varphi$ with $h = \rho^\beta$, $\beta = \frac{\gamma}{2} \ge \frac{1}{2}$. Applying \eqref{hardy1} with $V \equiv 1$ and $f = h$, there holds, for large $d$
\begin{align}
\label{new11}
\int_{\T^d} |\nabla v|^2 d\mu + \int_{\T^d} \frac{\Delta h}{h} v^2 d\mu = \int_{\T^d} \Big|\nabla v - \frac{\nabla h}{h} v\Big|^2 d\mu = \int_{\T^d} h^2|\nabla\varphi|^2 d\mu = \int_{\T^d} \rho^\gamma|\nabla\varphi|^2 d\mu
\end{align}

Notice that the weight $h$ is degenerate at $x = 0_{\R^d}$, but the identity is justified in the following standard way: We first use $h_\epsilon = (\rho +\epsilon)^\beta$ and $\epsilon > 0$; we get the corresponding equalities with $h_\epsilon$. As the test function is smooth and the possibly singular factors are integrable over $\T^d$ for $d \ge 3$, \eqref{new11} follows by the dominated convergence as $\epsilon\to 0$. 

\begin{rem}
In the sequel, we use several times the above approach, but we will not repeat the limit process. For the inequalities, we proceed also directly for the weights like $\rho^\gamma$, without mentioning eventual limit process using the dominated convergence or Fatou's lemma.
%In the same spirit, we omit to say that the computation work always with suitable large dimensions $d \ge d_0$.
\end{rem}

\medskip
Now we estimate the mean value of $v$, denoted by $\overline v = \E_\mu(v)$, we claim that
\begin{align}
\label{vbar}
\overline v^2 \le o_d(1)\int_{\T^d} v^2d\mu, \quad \mbox{as }\; d\to \infty.
\end{align}
Recall that $o_d(1)$ means always a sequence (independent of $\varphi$ or $v$) tending to zero as $d$ goes to infinity. As $\varphi \in \dot C^1(\T^d)$ and $v = \rho^\beta\varphi$,
\begin{align}
\label{new12}
\overline v^2 = \Big(\int_{\T^d} (1-\rho^{-\beta}) vd\mu\Big)^2 \leq \delta_d \int_{\T^d} v^2d\mu\quad \mbox{with } \; \delta_d := \int_{\T^d}\big(1-\rho^{-\beta}\big)^2d\mu.
\end{align}
On one hand, as $t \mapsto t^{-\beta}$ is Lipschitz over $[\frac{1}{2}, \infty)$,
\begin{align*}
\int_{\T^d\cap\{\rho\ge \frac{1}{2}\}}\big(1-\rho^{-\beta}\big)^2d\mu &\leq C_\gamma \int_{\T^d\cap\{\rho\ge \frac{1}{2}\}} \big(1-\rho\big)^2d\mu\\& \leq C_\gamma \int_{\T^d}\big(1-\rho\big)^2d\mu = C_\gamma {\rm Var}(\rho)=\frac{C_\gamma'}{d}.
\end{align*}
Here and after, $C_\gamma$ or $C_\gamma'$ denotes a positive constant depending on $\gamma$ but independent of the dimension $d$. 

\smallskip
On the other hand, since $\sin^2(\frac{t}{2})\geq \frac{t^2}{\pi^2}$ for $t\in[-\pi,\pi]$,
\begin{align*}
\rho = \frac{2\omega}{d} \geq \frac{2}{d}\sum_{j=1}^d \frac{|x_j|^2}{\pi^2} = \frac{2|x|^2}{\pi^2 d}, \quad \forall\; x \in \T^d.
\end{align*}
Let $|\B_d|$ be the volume of the Euclidean unit ball $\B_d \subset \R^d$. For $r > 0$, a direct calculation yields, as $d \to \infty$,
\begin{align*}
\frac{\mu\big(\T^d\cap\{\rho \le r\}\big)}{r^{d/2}} \le \frac{1}{(2\pi)^d}|\B_d| \Big(\frac{\pi^2d}{2}\Big)^\frac{d}{2} = \frac{(\pi d)^\frac{d}{2}}{2^\frac{3d}{2}\Gamma\big(\frac{d}{2} + 1\big)} \sim \frac{1}{\sqrt{\pi d}}\Big(\frac{\pi e}{4}\Big)^\frac{d}{2}.
\end{align*}
Hence there is $C_0 > 0$ such that for $d$ large and any $r > 0$, $\mu\big(\T^d\cap\{\rho \le r\}\big) \le (C_0r)^{d/2}$. 
Fix $M \in \mathbb{N}$ large such that $C_02^{-M}\leq 1$, we get, for large enough $d$,
\begin{align*}
\int_{\T^d\cap\{\rho < \frac{1}{2}\}}\big(1-\rho^{-\beta}\big)^2d\mu & = \sum_{k=2M}^\infty \int_{\T^d\cap\{2^{-k-1}\le \rho < 2^{-k}\}} \big(1-\rho^{-\beta}\big)^2 d\mu\\
& \quad + \int_{\T^d\cap\{2^{-2M}\le \rho < \frac{1}{2}\}} \big(1-\rho^{-\beta}\big)^2 d\mu\\
& \leq C_\gamma\sum_{k=2M}^\infty 2^{(k+1)\gamma}\mu\big(\T^d\cap\{\rho < 2^{-k}\}\big) + C_\gamma\mu\big(\T^d\cap\{\rho < 2^{-1}\}\big)\\
& \le C_\gamma\sum_{k=2M}^\infty 2^{(k+1)\gamma}2^{-kd/4} + C_\gamma\mu\big(\T^d\cap\{\rho < 2^{-1}\}\big).
\end{align*}
Clearly, the above series goes to zero when $d \to \infty$ by dominated convergence. For the second term, we consider $\rho$ as the average of a sequence of i.i.d. random variables $X_i = 1 - \cos x_i$ with $\E_\mu(X_i) = 1$. As $\rho = \overline Y + 1$ with $Y_i = X_i -1$, thanks to \eqref{Hoefg}, we have, 
 $$\mu\big(\T^d\cap\{\rho < 2^{-1}\}\big) \leq \mu\Big(|\overline Y| > \frac{1}{2}\Big) \rightarrow 0.$$
Thus $\lim_{d\to\infty} \delta_d = 0$. By \eqref{new12}, the claim \eqref{vbar} holds true. Using the standard Poincar\'e inequality over $\T^d$, there holds
\begin{align}
\label{new13}
\int_{\T^d} |\nabla v|^2 d\mu \ge \int_{\T^d} v^2 d\mu - \overline v^2 \ge (1 - \delta_d)\int_{\T^d} v^2 d\mu.
\end{align}

\medskip
Next, we claim
\begin{align}
\label{W}
W = \max\Big({-\frac{\Delta h}{h}}, 0\Big) \le C_\gamma \Big[(\rho-1)_+ + d^{-1}\Big]
\end{align}
In fact,
\begin{align*}
\frac{\Delta h}{h} = \beta\frac{\Delta\rho}{\rho} + \beta(\beta-1)\frac{|\nabla\rho|^2}{\rho^2}
= \beta\Big(\frac{1}{\rho}-1\Big) + \beta(\beta-1)\frac{|\nabla\rho|^2}{\rho^2}.
\end{align*}
Using \eqref{estrho},
\begin{itemize}
\item If $\beta \ge 1$ and $\rho \in (0, 1]$, then $W = 0$.
\item If $\beta \ge 1$ and $\rho \in (1, 2]$, then $W \le \beta\big(1 - \rho^{-1}\big) \le \beta (\rho -1)$.
%\item If $\frac{1}{2} \le \beta < 1$ and $\rho \in (0, 1]$, then $W \le \beta(1-\beta)\frac{|\nabla\rho|^2}{\rho^2} \le \frac{1}{2\rho d} \le d^{-1}$.
\item If $\frac{1}{2} \le \beta < 1$ and $\rho \in (0, 1]$, then $W \le \frac{2\beta(1-\beta)}{d}$ because
\begin{align*}
 \frac{\Delta h}{h} \ge \beta\Big(\frac{1}{\rho}-1\Big) + \beta(\beta -1)\frac{2}{\rho d} & = \beta\Big[\Big(1 - \frac{2- 2\beta}{d}\Big)\frac{1}{\rho} -1\Big]\\
 & \ge \beta\Big(1 - \frac{2 - 2\beta}{d} -1\Big) = -\frac{2\beta(1-\beta)}{d}.
\end{align*}
\item If $\frac{1}{2} \le \beta < 1$ and $\rho \in (1, 2]$, then $$W = \beta\Big(1 - \frac{1}{\rho}\Big) + \beta(1-\beta)\frac{|\nabla\rho|^2}{\rho^2} \le \beta(\rho-1) + \beta(1-\beta)\frac{2}{\rho d} \le C_\gamma \Big[(\rho-1)_+ + d^{-1}\Big].$$
\end{itemize}
So \eqref{W} holds true.

\medskip
Consider again $Y_i = X_i -1$, the sequence of independent random variables, applying Hoeffding's estimate \eqref{Hoeff} with $s = A\sqrt{d}$, and a fixed $A > 0$, there holds
\begin{align}
\label{Hoeff1}
\E_\mu\big(e^{A\sqrt{d}(\rho-1)_+}\big) \le \E_\mu\big(e^{A\sqrt{d}(\rho-1)}+1\big) \le e^\frac{A^2}{2}+1.
\end{align}
Now we can estimate the integral of $Wv^2$. Take $s = \sqrt{d}$ and $W_0 = C_\gamma[(\rho -1)_+ + d^{-1}]$, $f = v$. Combining \eqref{ent} and \eqref{Hoeff1}, we have 
\begin{align*}
\int_{\T^d} Wv^2 d\mu \leq \int_{\T^d} W_0v^2 d\mu &\leq \frac{1}{\sqrt d}\Big[\Ent_\mu(v^2) + \Big(\ln \int_{\T^d} e^{\sqrt{d}W_0}d\mu\Big)\E_\mu(v^2)\Big]\\
& \le \frac{1}{\sqrt d}\Big[C_{\rm LS} \int_{\T^d} |\nabla v|^2 d\mu + C_\gamma\E_\mu(v^2)\Big]\\
& \le \frac{C_\gamma}{\sqrt d}\Big[\int_{\T^d} |\nabla v|^2 d\mu + \int_{\T^d} v^2 d\mu\Big].
\end{align*}

Finally, combining with \eqref{new11} and \eqref{new13},
\begin{align*}
\int_{\T^d} \rho^\gamma|\nabla\varphi|^2 d\mu = \int_{\T^d} |\nabla v|^2 d\mu + \int_{\T^d} \frac{\Delta h}{h} v^2 d\mu & \ge \int_{\T^d} |\nabla v|^2 d\mu - \int_{\T^d} W v^2 d\mu\\
& \ge \Big(1 - \frac{C_\gamma}{\sqrt d}\Big)\int_{\T^d} |\nabla v|^2 d\mu  - \frac{C_\gamma}{\sqrt d}\int_{\T^d} v^2 d\mu\\
& \ge \Big[\Big(1 - \frac{C_\gamma}{\sqrt d}\Big)(1 - \delta_d) - \frac{C_\gamma}{\sqrt d}\Big]\int_{\T^d} v^2 d\mu\\
& = \Big(1 - o_d(1)\Big)\int_{\T^d} \rho^\gamma \varphi^2 d\mu.
\end{align*}
This completes the proof of \eqref{gap}. \qed

\subsection{Proof of \eqref{general1}}
Here we will apply the Hardy identity \eqref{hardy1}. Let $V = \omega^\gamma$ and $f = \omega^\alpha$, for $\alpha(\alpha+ \gamma -1) \ge 0$, a direct computation and \eqref{estomega} yield
\begin{align*}
-\frac{{\rm div}(V\nabla f)}{f} & = -\frac{\alpha(\alpha+ \gamma -1)\omega^{\alpha + \gamma - 2}|\nabla \omega|^2 + \alpha\omega^{\alpha+\gamma-1}\Delta \omega}{\omega^\alpha}\\
& \geq -\alpha\Big(\alpha + \gamma - 1 +\frac{d}{2}\Big)\omega^{\gamma -1} + \alpha\omega^{\gamma}.
\end{align*}
The singularity at the origin is again negligible. Therefore, applying \eqref{hardy1} and \eqref{gap} with $\alpha \le 0$ and $d \ge 2(\gamma-1-\alpha)$, for any $\varphi \in \dot{C}^1(\T^d)$,
\begin{align*}
\int_{\mathbb{T}^d} \omega^\gamma |\nabla \varphi|^2 dx & \ge -\alpha\big(\alpha + \gamma - 1 +\frac{d}{2}\big)\int_{\T^d} \omega^{\gamma -1}\varphi^2 dx + \alpha\int_{\T^d}\omega^{\gamma}\varphi^2 dx\\
& \ge -\alpha\Big(\alpha + \gamma - 1 +\frac{d}{2}\Big)\int_{\T^d} \omega^{\gamma -1}\varphi^2 dx + \alpha\big(1 + \epsilon_d\big)\int_{\T^d}\omega^{\gamma}|\nabla \varphi|^2 dx.
\end{align*}
Therefore, we obtain
\begin{align}
\int_{\mathbb{T}^d} \omega^\gamma |\nabla \varphi|^2 dx & \ge -\alpha\Big(\alpha + \gamma - 1 +\frac{d}{2}\Big)\frac{1}{1 - \alpha\big(1 + o_d(1)\big)} \int_{\T^d} \omega^{\gamma -1}\varphi^2 dx.
\end{align}
For fixed $\gamma$, choosing $\alpha = {-\sqrt{d}}$. Then $\alpha(\alpha+\gamma-1)\geq 0$ for large $d$, and
\begin{align*}
H(-\sqrt d)=\frac{\sqrt d\left(\frac{d}{2}-\sqrt d+\gamma-1\right)}{1+\sqrt d(1+o_d(1))}=\frac{d}{2}\Big(1-o_d(1)\Big),
\end{align*}
we conclude readily \eqref{general1}. \qed

\medskip

\subsection{Proof of \eqref{gap1}}
Firstly, for $\varphi \in \dot C^2(\T^d)$,
\begin{align}
\label{new1}
\begin{split}
\int_{\mathbb{T}^d} \omega^\gamma|\nabla\varphi|^2 dx & = \frac{1}{2}\int_{\T^d}\Delta(\omega^\gamma) \varphi^2 dx -\int_{\mathbb{T}^d}\omega^\gamma \varphi\Delta \varphi dx\\
& \leq \frac{1}{2}\int_{\T^d}\Delta(\omega^\gamma) \varphi^2 dx + \frac{1}{2}\int_{\mathbb{T}^d}\omega^\gamma\varphi^2 dx + \frac{1}{2}\int_{\T^d}\omega^\gamma|\Delta \varphi|^2 dx\\
%& \leq \frac{\gamma(2\gamma-2+d)}{4}\int_{\T^d} \omega^{\gamma-1} \varphi^2dx + \frac{1}{2}\int_{\T^d}\omega^\gamma|\Delta \varphi|^2 dx.
\end{split}
\end{align}
Now we use the following observation, as $|\nabla \omega|^2\leq \omega$, there holds
\begin{align*}
\Delta(\omega^\gamma) & = \gamma\omega^{\gamma-1}\Delta \omega +\gamma(\gamma-1)\omega^{\gamma-2}|\nabla\omega|^2 
\leq \gamma\omega^{\gamma-1}\Delta \omega +\gamma(\gamma-1)\omega^{\gamma-1}.
\end{align*}
Moreover, 
\begin{align*}
  \Big|
  \int_{\T^d}\omega^{\gamma-1}\Delta\omega\varphi^2 dx
  \Big| & = \Big|\int_{\T^d}\nabla\big(\omega^{\gamma-1}\varphi^2\big)\cdot\nabla\omega dx\Big|\\
  & \le (\gamma-1)
  \int_{\T^d}\omega^{\gamma-2}|\nabla\omega|^2\varphi^2 dx + 2\int_{\T^d}\omega^{\gamma-1}|\varphi||\nabla\omega||\nabla\varphi| dx\\
  & \le (\gamma-1) \int_{\T^d}\omega^{\gamma-1}\varphi^2 dx + 2\int_{\T^d}\omega^{\gamma-\frac{1}{2}}|\varphi||\nabla\varphi| dx\\
  & \le (\gamma-1) \int_{\T^d}\omega^{\gamma-1}\varphi^2 dx + 2\Big(\int_{\T^d}\omega^{\gamma-1}\varphi^2dx \Big)^\frac{1}{2}
  \Big(\int_{\T^d}\omega^\gamma|\nabla\varphi|^2 dx\Big)^\frac{1}{2},
\end{align*}
where we applied \eqref{estomega} and Cauchy-Schwarz inequality. By \eqref{general1}, we conclude that
\begin{align}
\label{new4}
  \Big|
  \int_{\T^d}\omega^{\gamma-1}\Delta\omega\varphi^2 dx
  \Big|
  \le
  \frac{C_\gamma}{\sqrt{d}}\Big(1+o_d(1)\Big)
  \int_{\T^d}\omega^\gamma |\nabla\varphi|^2 dx, \quad \forall\; \varphi \in \dot{C}^1(\T^d).
\end{align}
Coming back to \eqref{new1}, applying \eqref{new4}, \eqref{gap} and \eqref{general1}, there holds
\begin{align*}
%\label{new5}
\int_{\mathbb{T}^d} \omega^\gamma|\nabla\varphi|^2 dx & \le \Big(\frac{1}{2} +o_d(1)\Big)\int_{\mathbb{T}^d}\omega^\gamma{|\nabla\varphi|^2} dx + \frac{1}{2}\int_{\T^d}\omega^\gamma|\Delta \varphi|^2 dx,
\end{align*}
hence
\begin{align*}
\Big(\frac{1}{2}-o_d(1)\Big) \int_{\mathbb{T}^d} \omega^\gamma|\nabla\varphi|^2 dx \leq \frac{1}{2}\int_{\T^d}\omega^\gamma|\Delta \varphi|^2 dx
\end{align*}
which is equivalent to \eqref{gap1}. \qed

\medskip

\subsection{Proof of \eqref{general2}}
Finally, we consider \eqref{general2} with the Rellich identity \eqref{rellich}. Choose again $V =\omega^\gamma$, $f=\omega^\alpha$, we get
\begin{align*}
-\frac{{\rm div}(V\nabla f)}{f} - \Delta V & = -\frac{\alpha(\alpha+ \gamma -1)\omega^{\alpha + \gamma - 2}|\nabla \omega|^2 + \alpha\omega^{\alpha+\gamma-1}\Delta \omega}{\omega^\alpha} - \Delta(\omega^\gamma)\\
& = -\big[\alpha(\alpha+ \gamma -1)+\gamma(\gamma - 1)\big]|\nabla\omega|^2 \omega^{\gamma -2} -(\alpha + \gamma) \omega^{\gamma -1}\Delta \omega\\
& \geq -\Big[\alpha^2 + (\alpha + \gamma)\big(\gamma - 1 +\frac{d}{2}\big)\Big]\omega^{\gamma -1} + (\alpha + \gamma) \omega^{\gamma}.
\end{align*}
%Similarly, the singularity at the origin is again removed by replacing $\omega$ by $\omega+\epsilon$ and passing to the limit.

Furthermore,
\begin{align*}
V_{x_ix_j}=\gamma(\gamma-1)\omega^{\gamma-2} \omega_{x_i}\omega_{x_j} + \gamma \omega^{\gamma-1} \omega_{x_ix_j},
\end{align*}
as 
\begin{align*}
\omega_{x_i}=\frac{\sin x_i}{2},\quad  \omega_{x_ix_j} =\frac{\cos x_i}{2} \delta_{ij}.
\end{align*}
Thus, there hold
\begin{align*}
V_{x_ix_j} = \frac{\gamma(\gamma-1)}{4}\omega^{\gamma-2}\sin x_i\sin x_j + \frac{\gamma}{2}\omega^{\gamma -1}\cos x_i\delta_{ij}
\end{align*}
and
\begin{align*}
\sum_{i,j=1}^d\int_{\T^d}V_{x_ix_j}\varphi_{x_i}\varphi_{x_j}dx & = \frac{\gamma(\gamma-1)}{4}\int_{\T^d}\omega^{\gamma-2}\Big(\sum_{i=1}^d\sin(x_i)\varphi_{x_i}\Big)^2 dx\\
& \quad + \frac{\gamma}{2}\int_{\T^d}\omega^{\gamma-1}\sum_{i=1}^d\cos(x_i)\varphi_{x_i}^2 dx.
\end{align*}
Therefore, using 
\begin{align*}
\Big(\sum_{i=1}^d\sin(x_i)\varphi_{x_i}\Big)^2 \leq \Big(\sum_{i=1}^d \sin^2 x_i\Big)|\nabla \varphi|^2 \leq 4\omega(x)|\nabla \varphi|^2,
\end{align*}
we see that
\begin{align*}
\Big|\sum_{i,j=1}^d\int_{\T^d}V_{x_ix_j}\varphi_{x_i}\varphi_{x_j}dx\Big| & \leq \gamma(\gamma-1)\int_{\T^d}\omega^{\gamma-1}|\nabla\varphi|^2 dx + \frac{\gamma}{2}\int_{\T^d}\omega^{\gamma-1}|\nabla\varphi|^2 dx\\
& \leq \frac{2\gamma^2 - \gamma}{2} \int_{\mathbb{T}^d} \omega^{\gamma-1}|\nabla\varphi(x)|^2dx.
\end{align*}
By \eqref{rellich} and \eqref{gap1}, for $\alpha+\gamma<0$, we have
\begin{align}\label{g1}
\begin{split}
\int_{\T^d} \omega^\gamma|\Delta \varphi|^2 dx &\geq -\Big[\alpha^2 + (\alpha + \gamma)\big(\gamma - 1 +\frac{d}{2}\big)\Big]\int_{\mathbb{T}^d } \omega^{\gamma -1}|\nabla \varphi|^2 dx\\
& \quad + (\alpha + \gamma)\int_{\T^d}\omega^{\gamma}|\nabla \varphi|^2 dx - \frac{2\gamma^2 - \gamma}{2}\int_{\mathbb{T}^d}\omega^{\gamma-1}|\nabla\varphi|^2dx\\
& \ge -\Big[\alpha^2 + (\alpha + \gamma)\big(\gamma - 1 +\frac{d}{2}\big) + \frac{2\gamma^2 - \gamma}{2}\Big]\int_{\mathbb{T}^d } \omega^{\gamma -1}|\nabla \varphi|^2 dx\\
& \quad + (\alpha + \gamma)\big(1 + o_d(1)\big)\int_{\T^d}\omega^{\gamma}|\Delta \varphi|^2 dx.
\end{split}
\end{align}
Hence  
\begin{align*}
%\label{estg1}
\int_{\mathbb{T}^d} \omega^\gamma|\Delta \varphi|^2 dx \geq -\frac{G(\alpha)}{1-(\alpha+\gamma)(1+o_d(1))} \int_{\mathbb{T}^d }\omega^{\gamma-1}|\nabla \varphi(x)|^2 dx,
\end{align*}
where
\begin{align*}
G(t) = t^2 + (t + \gamma)\big(\gamma - 1 +\frac{d}{2}\big) + \frac{2\gamma^2 - \gamma}{2}.
\end{align*}
We check readily that, for any fixed $\gamma$, taking $\alpha = -\sqrt{d}$, 
\begin{align*}
G(-\sqrt d)=-\frac{1}{2} d^{3/2}+ (1+\frac{\gamma}{2})d-(\gamma-1)\sqrt d +2\gamma^2-\frac{3}{2}\gamma=-\frac{1}{2} d^{3/2}\Big(1+o_d(1)\Big).
\end{align*}
This means that
\begin{align*}
-\frac{G(-\sqrt d)}{1-(\gamma-\sqrt d)(1+o_d(1))}=\frac{d^{\frac{3}{2}}(1+o_d(1))}{2\sqrt d(1+o_d(1))}=\frac{d}{2}\Big(1+o_d(1)\Big).
\end{align*}
So we are done. \qed

\medskip

\section{Asymptotically sharp Hardy-Rellich inequalities}
\reset
Here we establish our main results, Theorem \ref{th3} and Theorem \ref{main}. 

\subsection{Proof of Theorem \ref{th3}} Consider the situation on $\T^d$, the lower bound estimates are direct consequences of \eqref{general1} and \eqref{general2}, which are equivalent to 
\begin{align*}
\liminf_{d\to\infty}\frac{C_{d,1,0,\gamma}}{d} \ge \frac{1}{2}, \quad \liminf_{d\to\infty}\frac{C_{d,2,1,\gamma}}{d} \ge \frac{1}{2}.
\end{align*}
 For every fixed $\eta\geq 1$ and every fixed $\ell\in\N$, applying \eqref{general1} and \eqref{general2}, there holds 
\begin{align}\label{k=l+1}
\liminf_{d\to\infty}\frac{C_{d,\ell+1,\ell,\eta}}{d} \ge \frac{1}{2}, \quad \forall\; \ell \in \N.
\end{align}
Indeed, if $\ell$ is even, we apply \eqref{general1} with $\Delta^{\frac{\ell}{2}}\phi$; whereas for $\ell$ odd,  we use \eqref{general2} to $\Delta^\frac{\ell-1}{2}\phi$.

\medskip
Let now $m=k-\ell$, then $m\leq \gamma$, we obtain by definition
\begin{align*}
C_{d,\ell+m,\ell,\gamma} \ge \prod_{i=\ell}^{\ell+m-1} C_{d,i+1,i,\gamma-m+1+i-\ell}, \quad \forall\; \ell \in \N.
\end{align*}

Applying \eqref{k=l+1}, we have
\begin{align}\label{lower}
\liminf_{d\to\infty}\frac{C_{d,\ell+m,\ell,\gamma}}{d^m} \ge 2^{-m}.
\end{align}

For the upper bound estimates, we choose a special test function in $C^\infty(\T^d)$. Let $\phi_1(x) = \sin(x_1)$. We claim that given any $\ell, \gamma \in \N$,
\begin{align}\label{phi1}
\int_{\T^d} |D^\ell\phi_1|^2\omega^\gamma dx = \Big(\frac{d}{2}\Big)^\gamma\frac{(2\pi)^d}{2} \Big[1 + O(d^{-1})\Big], \quad \mbox{as }\; d \rightarrow \infty.
\end{align}
Let $\ell \in 2\N$, so $D^\ell\phi_1 = (-1)^{\ell/2}\phi_1$. Recall that $\omega(x)= \frac{d}{2}(1-S)$ with $S:=\frac{1}{d}\sum_{1\le j\le d} \cos x_j$.
\begin{align*}
\Big(\frac{2}{d}\Big)^\gamma\int_{\T^d} |D^\ell\phi_1|^2\omega^\gamma dx & = \int_{\T^d} (\sin x_1)^2(1-S)^\gamma dx\\
& = \int_{\T^d} (\sin x_1)^2 dx + \sum_{k=1}^\gamma C_\gamma^k (-1)^k \int_{\T^d} (\sin x_1)^2 S^k dx\\
& = \frac{(2\pi)^d}{2} + \sum_{k=2}^\gamma C_\gamma^k (-1)^k \int_{\T^d} (\sin x_1)^2 S^k dx,
\end{align*}
where we used the fact that the integral of $(\sin x_1)^2S$ vanishes by periodicity and orthogonality. As $|S(x)| \le 1$, 
\begin{align*}
\Big|\sum_{k=2}^\gamma C_\gamma^k (-1)^k \int_{\T^d} (\sin x_1)^2 S^k dx\Big| \leq \sum_{k=2}^\gamma C_\gamma^k \int_{\T^d} S^2 dx \le 2^\gamma \int_{\T^d} S^2 dx.
\end{align*}
Moreover,
\begin{align*}
\int_{\T^d} S^2 dx = \frac{1}{d^2} \sum_{i=1}^d\sum_{j=1}^d \int_{\T^d} \cos x_i \cos x_j dx = \frac{1}{d^2} \sum_{i=1}^d \int_{\T^d} (\cos x_i)^2 dx =\frac{ (2\pi)^{d} }{2d},
\end{align*}
there holds
\begin{align*}
\int_{\T^d} |D^\ell\phi_1|^2\omega^\gamma dx & = \Big(\frac{d}{2}\Big)^\gamma\frac{(2\pi)^d}{2} \Big[1 + O(d^{-1})\Big],
\end{align*}
which implies \eqref{phi1} for $\ell$ even. 

\medskip
If $\ell \in 2\N + 1$ is odd, then
\begin{align*}
|D^\ell\phi_1|^2=(\cos x_1)^2
\end{align*}
and the same argument gives
\begin{align*}
\int_{\T^d} |D^\ell\phi_1|^2\omega^\gamma dx & = \Big(\frac{d}{2}\Big)^\gamma\frac{(2\pi)^d}{2} \Big[1 + O(d^{-1})\Big].
\end{align*}
So \eqref{phi1} is valid for all $\ell$. Applying \eqref{phi1}, there holds 
\begin{align}
\label{upper}
\limsup_{d\to\infty}\frac{C_{d,\ell+m,\ell,\gamma}}{d^m} \le 2^{-m}.
\end{align}
The proof of Theorem \ref{th3} is completed using \eqref{lower} and \eqref{upper}.\qed

\subsection{Proof of Theorem \ref{main}}

Finally, we handle the lattice case $\Z^d$. Combining Lemma \ref{g} and Theorem \ref{th3}, we see that for any $m \ge 1$,
\begin{align*}
\liminf_{d\to\infty}\frac{\C_{m,d}}{d^m} \geq 4^m\liminf_{d\to\infty}\frac{C_{d,2m,m,m}}{d^m} = 2^m.
\end{align*}

\medskip
Remark that the function $\Psi$ given by Lemma \ref{g} depends on the integer $m$ generally, so we cannot conclude the upper bound estimate for $\C_{m,d}$ obviously by Theorem \ref{th3}. By a closer inspection of the proof in \cite{G}, we observe that the function $\Psi$ yielding equality is given by 
\begin{align}
\label{Psim}
\Psi(x) = (-i)^m {\mathcal F}\Big\{\frac{u(n)}{|n|^{2m}}\Big\}(x).
\end{align}
Here ${\mathcal F}$ denotes the Fourier transform from $C_c(\Z^d)$ to $C^\infty(\T^d)$ given by
\begin{align*}
{\mathcal F}\{a(n)\}(x) = (2\pi)^{-d/2}\sum_{n\in \Z^d} a(n)e^{-in\cdot x}, \quad x \in \R^d.
\end{align*}

\smallskip
Take $$u_1(n) = \frac{(2\pi)^{d/2}}{2}\big(\delta_{e_1}-\delta_{-e_1}\big) \in \Ec(\Z^d)$$ with support in $\{|n|=1\}$, we see that the corresponding choice of $\Psi$, up to a constant factor, does not depend on $m$. Applying \eqref{Psim}, all the identities in Lemma \ref{g} hold with $\Psi$ replaced by a constant multiple of $\varphi_1(x)= \sin x_1$. Therefore, the estimates for the flat torus $\T^d$ yield
\begin{align*}
\limsup_{d\to\infty}\frac{\C_{m,d}}{d^m} \leq 2^m.
\end{align*}
This completes the proof.\qed

\medskip

\begin{rem}
After our work was completed, we learned that Gupta \cite{ Gupta} independently obtained the same result using a different approach.
\end{rem}

\noindent{\bf Acknowledgements.}
The authors are partially supported by NSFC (No.12271164) and Science and Technology Commission of Shanghai Municipality (No. 22DZ2229014).

\medskip

\noindent{\bf Data availability}
Data sharing is not applicable to this work as no new data were created or analyzed in the study.

\end{document}